\newcommand*{\R}{\mathbb{R}}
\newcommand*{\Part}[2]{\frac{\partial #1}{\partial #2}}
\newcommand*{\tp}{\mathrm{T}}
\newcommand*{\lSet}{\mathcal{L}}
\newcommand*{\SO}[1][3]{\mathrm{SO}(#1)}
\newcommand*{\ellE}{\bar{E}}
\newcommand*{\ellK}{\bar{K}}
\newcommand*{\Rho}{\mathrm{R}}
\DeclareMathOperator{\diag}{diag}
\DeclareMathOperator{\area}{area}
\newtheorem{theorem}{Theorem}
\newtheorem{lemma}[theorem]{Lemma}
\newtheorem{proposition}[theorem]{Proposition}
\newtheorem{corollary}[theorem]{Corollary}
\theoremstyle{definition}
\newtheorem{definition}[theorem]{Definition}
\newtheorem{example}[theorem]{Example}
\theoremstyle{remark}
\begin{document}

\title{Minimal area conics in the elliptic plane}
\author{Matthias.\,J.~Weber \and Hans-Peter~Schröcker}
\date{\today}

\address{Matthias.\,J.~Weber, Hans-Peter Schröcker, Unit Geometry and
  CAD, University Innsbruck, Technikerstraße 13, 6020 Innsbruck,
  Austria}

\keywords{Minimal conic, sphero-conic, spherical ellipse, covering cone, elliptic geometry, spherical geometry, enclosing conic, uniqueness}
\subjclass[2010]{52A40; 
  52A55, 
  51M10} 

\begin{abstract}
  We prove some uniqueness results for conics of minimal area that
  enclose a compact, full-dimensional subset of the elliptic
  plane. The minimal enclosing conic is unique if its center or axes
  are prescribed. Moreover, we provide sufficient conditions on the
  enclosed set that guarantee uniqueness without restrictions on the
  enclosing conics. Similar results are formulated for minimal
  enclosing conics of line sets as well.
\end{abstract}

\maketitle

\section{Introduction}
\label{sec:introduction}

It is well know that every compact subset $F$ of the Euclidean plane
with inner points can be enclosed by a unique ellipse of minimal
area. More generally, every compact subset $F$ of $d$-dimensional
Euclidean space with inner points defines a unique enclosing ellipsoid
of minimal volume (see \cite{behrend37:_affininv_konvex,%
  behrend38:_kleinste_ellipse} for the case $d=2$ and
\cite{john48:_studies_and_essays,%
  danzer57:_loewner_ellipsoid} for general~$d$).

These uniqueness results are generally considered as ``easy''. The
reason for the existence of simple proofs are illuminated by recent
publications of the authors
\cite{schroecker08:_uniqueness_results_ellipsoids,%
  weber10:_davis_convexity_theorem}. We showed that numerous
uniqueness results in Euclidean spaces are a consequence of a simple
convexity property of the function that measures the ellipsoid's
size. Most notably, minimal enclosing ellipsoids with respect to
quermass integrals are unique (see also
\cite{firey64:_means_of_convex_bodies,gruber08:_john_type}).

In the present article we establish first uniqueness results in the
elliptic plane. We provide sufficient conditions on the enclosed set
$F$ that guarantee uniqueness of the minimal enclosing conic. To the
best of our knowledge, these are the first uniqueness results in a
non-Euclidean geometry. This is maybe the case because our uniqueness
results are not easy in the sense described above. While uniqueness in
the co-axial and concentric case can still be deduced from a convexity
property of the area function, uniqueness in the general case requires
extra work. The necessary calculations are rather involved and
constitute the largest part of this article.

We mostly use the spherical model of the elliptic plane. It is easily
obtained from the bundle model whose ``points'' are the
one-dimensional subspaces of the vector space $\R^3$. The distance of
two points in the bundle model is defined as the Euclidean angle
between lines and is a bi-valued function. The straight lines in the
bundle model are the two-dimensional subspaces. Their angle is the
usual Euclidean angle.

In this setting, computational aspects of the minimal circular cone
problem (where uniqueness is elementary) have already attracted the
attention of applied mathematicians
\cite{lawson65:_smallest_covering_cone,%
  barequet05:_optimal_bounding_cones}. We believe that the
applications mentioned in \citep{barequet05:_optimal_bounding_cones}
could profit from using minimal enclosing conics (or cones of second
degree) instead of circles (or right circular cones).

The spherical model of the elliptic plane is obtained by intersecting
the bundle model with the unit sphere $S^2$. The metric is inherited
from the ambient Euclidean space and the only difference to spherical
geometry is the identification of antipodal points. This is merely a
technical issue so that our uniqueness results can also be formulated
for sphero-conics.

We continue this article by an introduction to conics in the elliptic
plane. In Section~\ref{sec:area-conic} we derive some convexity
properties of their area function which are used in
Section~\ref{sec:coaxial-and-concentric-conics} for proving uniqueness
in the co-axial and concentric case. The general uniqueness result is
presented in Section~\ref{sec:general-case}. In
Section~\ref{sec:line-sets} we derive uniqueness results for minimal
enclosing conics of line-sets, analogous to those of
\cite{schroecker07:_minim_hyper}. The duality between points and lines
in the elliptic plane makes them simple corollaries. Most auxiliary
results are collected in an appendix.

\section{Preliminaries}
\label{sec:preliminaries}

A conic $C$ in the spherical model of the elliptic plane is the
intersection of the unit sphere $S^2$ with a quadratic cone whose
vertex is the center of~$S^2$:
\begin{equation}
  \label{eq:1}
  C = \{x \in S^2\colon x^\tp \cdot M \cdot x = 0\},
\end{equation}
where $M \in \R^{3 \times 3}$ is an indefinite symmetric matrix of
full rank. Since proportional matrices describe the same conic, it is
no loss of generality to assume that $M$ has eigenvalues $\nu_1 \ge
\nu_2 > 0$ and $\nu_3 = -1$. Then the interior of the conic $C$
consists of all points $x$ that fulfill the inequality $x^\tp \cdot M
\cdot x < 0$.

\begin{figure}
  \centering
  \includegraphics{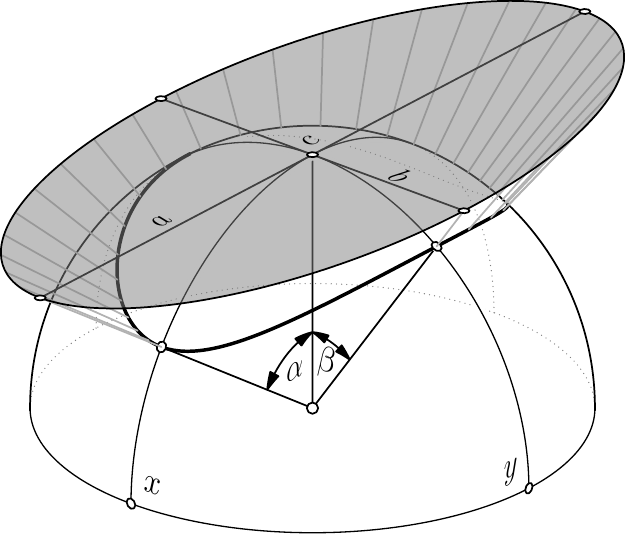}
  \caption{Conic as intersection of $S^2$ with a quadratic cone}
  \label{fig:sphero-conic}
\end{figure}

The transformation group of elliptic geometry is the rotation group
$\SO$. Thus, the matrix $M$ has the normal form
\begin{equation}
  \label{eq:2}
  M = \diag(\nu_1, \nu_2, -1) = \diag(b^{-2}, a^{-2}, -1)
\end{equation}
where $a = \nu_2^{-1/2}$, $b = \nu_1^{-1/2}$. The values $\alpha =
\arctan a$ and $\beta = \arctan b$ are the conic's \emph{semi-axis
  lengths} (Figure~\ref{fig:sphero-conic}).

Generally, the three points $x = (1,0,0)^\tp$, $y = (0,1,0)^\tp$, and
$c = (0,0,1)^\tp$ are called the centers of $C$. Any line through one
of them is a diameter since its intersection points with the conic are
at equal (possibly complex) distance to the center. Among the three
centers the point $c$ is distinguished by the fact that it lies in the
interior of $C$. It has also a special meaning in the context of
minimal enclosing conics so that we use the word \emph{center}
exclusively for the point $c$. The three lines spanned by any two of
the points $x$, $y$, and $c$ are lines of symmetry and are often
called \emph{axes}. We reserve this term for the two lines $X = c \vee
x$ and $Y = c \vee y$. If $\alpha > \beta$, we call $X$ the
\emph{major axis} and $Y$ the \emph{minor axis.}

Note that elliptic geometry does not distinguish between different
types of regular conics apart from circles and non-circular (general)
conics. Indeed, the point set \eqref{eq:1} satisfies the well-known
focal definitions of both, ellipses and hyperbolas. This is possible
because the distance in the elliptic plane is bi-valued.

If $M$ is not in normal form, semi-axis lengths, center, and axes can
be obtained from the eigenvalues and eigenvectors of $M$.  Denote the
vector of eigenvalues of $M$, arranged in decreasing order, by $e(M)$,
\begin{equation}
  \label{eq:3}
  e(M) = (\nu_1, \nu_2, \nu_3)^\tp, \quad\text{where}\quad
  \nu_1 \ge \nu_2 > 0 \quad\text{and}\quad \nu_3 < 0,
\end{equation}
and the corresponding eigenvectors by $y$, $x$, and $c$. The function
\begin{equation}
  \label{eq:4}
  w \colon (\nu_1, \nu_2, \nu_3)^\tp \mapsto
  (a, b)^\tp =
  \Bigl( \sqrt{-\frac{\nu_3}{\nu_2}}, \sqrt{-\frac{\nu_3}{\nu_1}} \Bigr)^\tp,
\end{equation}
computes the tangents $a = \tan \alpha$ and $b = \tan \beta$ of the
semi-axis lengths $\alpha$ and $\beta$. The vector $c$ points to the
center of $C$. If $\nu_1 > \nu_2$, the conic is not a circle, the
major axis exists and is incident with~$x$.

\section{The area of a conic in the elliptic plane}
\label{sec:area-conic}

Now we derive some properties of the area function of conics in the
elliptic plane. The surface area of $C$ is a strictly monotone
increasing function of $\alpha$ and $\beta$. For our purpose it is
more convenient to view it as a strictly monotone decreasing function
of the variables $-\nu_1/\nu_3$ and $-\nu_2/\nu_3$.

\subsection{The area function.}
\label{sec:area-function}

We compute the area for the normal form \eqref{eq:2} of the matrix
$M$. The upper half of the unit sphere $S^2$ can be parametrized as
\begin{equation}
  \label{eq:5}
  S^2 \colon
  \begin{pmatrix}
    \cos \varphi \cos \vartheta\\
    \sin \varphi \cos \vartheta\\
    \sin \vartheta
  \end{pmatrix}, \quad
  \varphi \in [-\pi, \pi], \ \vartheta \in \bigl[ 0, \frac{\pi}{2} \bigr].
\end{equation}
The points inside $C$ belong to parameter values $(\varphi,
\vartheta)$ related by
\begin{equation}
  \label{eq:6}
  \vert \vartheta \vert > \vartheta_0 =
  \arcsin\biggl(\sqrt{\frac{a^2
      \sin^2 \varphi + b^2 \cos^2 \varphi}{a^2 b^2 + a^2
      \sin^2 \varphi + b^2 \cos^2 \varphi}}\biggr).
\end{equation}
By integrating the area element $\cos\vartheta\, \dif\vartheta \wedge
\dif\varphi$ of \eqref{eq:5} we obtain the area of the conic $C$ as
\begin{multline}
  \label{eq:7}
  \area(C) = \area(a,b) =
  \int_{-\pi}^{\pi} \int_{\vartheta_o}^{\pi/2} \cos\vartheta \dif\vartheta \dif\varphi =
  \int_{-\pi}^{\pi} 1 - \sin\vartheta_0 \dif\varphi \\
  = 2 \pi - \int_{-\pi}^{\pi} \sqrt{\frac{a^2
    \sin^2 \varphi + b^2 \cos^2 \varphi}{a^2 b^2 + a^2
    \sin^2 \varphi + b^2 \cos^2 \varphi}} \dif\varphi.
\end{multline}
The integral representation \eqref{eq:7} is perfectly suitable for our
purposes so that we refrain from expressing the area in terms of
elliptic integrals. Substituting $a^2 = -\nu_3/\nu_2$ and $b^2 =
-\nu_3/\nu_1$ into \eqref{eq:7} we obtain the area in terms of the
eigenvalues of the matrix $M$:
\begin{equation}
  \label{eq:8}
    \area(C) = \area(\nu_1, \nu_2, \nu_3)
    = 2 \pi - \int_{-\pi}^{\pi} \sqrt{\frac{\nu_1 \sin^2 \varphi + \nu_2
        \cos^2 \varphi}{-\nu_3 + \nu_1 \sin^2 \varphi + \nu_2 \cos^2
        \varphi}} \dif\varphi.
\end{equation}
The matrix $M$ is determined by the conic $C$ only up to a scalar
factor. Thus, we may normalize it such that $\nu_3 = -1$. Then the
area becomes
\begin{equation}
  \label{eq:9}
  \area(C) = \area(\nu_1, \nu_2) =
  2 \pi - \int_{-\pi}^{\pi} \sqrt{\frac{\nu_1 \sin^2 \varphi + \nu_2 \cos^2 \varphi}{1 +
      \nu_1 \sin^2 \varphi + \nu_2 \cos^2 \varphi}} \dif\varphi.
\end{equation}

\subsection{Convexity of the area function}
\label{sec:area-convex}

We prove that the function \eqref{eq:9} is strictly convex for
$\nu_1$, $\nu_2 > 0$. The standard arguments of
\cite{schroecker08:_uniqueness_results_ellipsoids,%
  weber10:_davis_convexity_theorem} then imply uniqueness of the
minimal enclosing conic among all conics with prescribed axes or
prescribed center. These proofs are given later, in
Section~\ref{sec:uniqueness-results}.

\begin{lemma}
  \label{lem:1}
  The area function \eqref{eq:9} is strictly convex.
\end{lemma}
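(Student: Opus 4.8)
The plan is to prove convexity directly from the integral representation~\eqref{eq:9}. Since the sum of convex functions is convex, and integration preserves convexity (an integral of convex integrands over a fixed domain is convex), it suffices to show that for each fixed $\varphi$ the integrand
\begin{equation*}
  f(\nu_1, \nu_2) = -\sqrt{\frac{\nu_1 \sin^2 \varphi + \nu_2 \cos^2 \varphi}{1 + \nu_1 \sin^2 \varphi + \nu_2 \cos^2 \varphi}}
\end{equation*}
is convex in $(\nu_1, \nu_2)$ on the positive quadrant, with strict convexity holding on a set of $\varphi$ of positive measure so that the integral is \emph{strictly} convex. The constant $2\pi$ plays no role. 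The key structural observation is that the arguments enter only through the single linear combination $t = \nu_1 \sin^2\varphi + \nu_2 \cos^2\varphi$, so $f$ is the composition of the affine map $(\nu_1,\nu_2) \mapsto t$ with the one-variable function $g(t) = -\sqrt{t/(1+t)}$.

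First I would reduce to a one-dimensional problem: a convex function composed with an affine map is convex, so the whole question collapses to showing that $g(t) = -\sqrt{t/(1+t)} = -(t/(1+t))^{1/2}$ is convex for $t > 0$. This is a routine single-variable computation — I would compute $g''(t)$ and check its sign. Writing $g(t) = -(1 - (1+t)^{-1})^{1/2}$ makes the differentiation manageable; I expect $g''(t) > 0$ throughout $t>0$, giving strict convexity of $g$ on each ray. This step is elementary and poses no real difficulty.

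The genuine obstacle is the passage from convexity of each integrand to \emph{strict} convexity of the integral, because the affine reduction is degenerate in the following sense. For a \emph{fixed} $\varphi$, the integrand $f$ is constant along the lines $\nu_1 \sin^2\varphi + \nu_2 \cos^2\varphi = \text{const}$, hence is \emph{not} strictly convex as a function of $(\nu_1, \nu_2)$ — it is only convex. Strict convexity must therefore come from combining different values of $\varphi$: the level directions of the integrand rotate with $\varphi$, so no single direction in $(\nu_1,\nu_2)$-space lies in the kernel of the second-order behaviour for all $\varphi$ simultaneously. The careful argument is that for any two distinct points $P_0, P_1$ in the positive quadrant, the segment direction $P_1 - P_0$ fails to be a null direction of $g''$ for all $\varphi$ except possibly the finitely many $\varphi$ where $\sin^2\varphi\,(P_1-P_0)_1 + \cos^2\varphi\,(P_1-P_0)_2 = 0$; on the complementary set of positive measure the integrand is strictly convex along the segment, and integrating a strict inequality over a positive-measure set yields the strict inequality for $\area$.

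Concretely, I would fix $P_0 \neq P_1$ and $\lambda \in (0,1)$, set $P_\lambda = (1-\lambda)P_0 + \lambda P_1$, and use convexity of $g$ to get $f(P_\lambda) \le (1-\lambda) f(P_0) + \lambda f(P_1)$ pointwise in $\varphi$, with \emph{strict} inequality whenever $t_0(\varphi) \neq t_1(\varphi)$, i.e.\ whenever $\sin^2\varphi\,((P_1)_1 - (P_0)_1) + \cos^2\varphi\,((P_1)_2 - (P_0)_2) \neq 0$. Since $P_0 \neq P_1$, this last expression is a nonzero trigonometric function of $\varphi$ and vanishes only on a finite subset of $[-\pi,\pi]$. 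Integrating over $\varphi$ then preserves the strict inequality, establishing that $\area(\nu_1,\nu_2)$ is strictly convex.
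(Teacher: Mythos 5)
Your proposal is correct, and it takes a recognizably different route from the paper's proof. The paper computes the Hessian of the integral directly: the diagonal entry $\partial^2\area/\partial\nu_1^2 = \frac14\int J\sin^4\varphi\,\dif\varphi$ is manifestly positive, and positivity of the determinant is obtained from the Cauchy--Schwarz inequality applied to $\int J\sin^4\varphi\,\dif\varphi$, $\int J\cos^4\varphi\,\dif\varphi$ and $\int J\sin^2\varphi\cos^2\varphi\,\dif\varphi$, with strictness because $\sqrt{J}\sin^2\varphi$ and $\sqrt{J}\cos^2\varphi$ are not proportional. You instead exploit the structural fact that the integrand depends on $(\nu_1,\nu_2)$ only through $t=\nu_1\sin^2\varphi+\nu_2\cos^2\varphi$, reduce to the one-variable function $g(t)=-\sqrt{t/(1+t)}$, and recover strictness from the observation that the null direction of each (rank-one) integrand Hessian rotates with $\varphi$. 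The one step you defer --- the sign of $g''$ --- does work out: $g''(t)=\tfrac14(1+4t)\,t^{-3/2}(1+t)^{-5/2}>0$, which is exactly the quantity $J/4$ in the paper's Equation~\eqref{eq:11}; and your finiteness claim for the zero set of $d_1\sin^2\varphi+d_2\cos^2\varphi$ holds for every $(d_1,d_2)\neq(0,0)$, so the positive-measure strictness argument closes. The two proofs are two encodings of the same underlying fact (the Hessian of the integral is $\int g''(t)\,vv^\tp\dif\varphi$ with $v=(\sin^2\varphi,\cos^2\varphi)^\tp$, positive definite because no fixed direction is orthogonal to $v$ for almost every $\varphi$); yours makes the mechanism more transparent and avoids computing the full Hessian of the integral, at the price of a slightly more careful almost-everywhere strictness argument, whereas the paper's is more computational but dispatches strictness with a one-line appeal to Schwarz.
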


\begin{proof}
  We show that the Hessian $H$ of \eqref{eq:9} is positive definite,
  that is, all its principal minors are positive. The upper left
  entry of $H$ equals
  \begin{equation}
    \label{eq:10}
    \dpd[2]{\area}{\nu_1} =
    \frac{1}{4} \int_{-\pi}^{\pi} J \sin^4 \varphi \dif\varphi,
  \end{equation}
  where
  \begin{equation}
    \label{eq:11}
    J = \frac{1 + 4 \nu_1 \sin^2 \varphi + 4 \nu_2 \cos^2 \varphi}
             {(\nu_1 \sin^2 \varphi + \nu_2 \cos^2 \varphi)^{3/2}
              (1 + \nu_1 \sin^2 \varphi + \nu_2 \cos^2 \varphi)^{5/2}}.
  \end{equation}
  Clearly, the integral \eqref{eq:10} is strictly positive. The
  determinant of $H$ equals
  \begin{multline}
    \label{eq:12}
    \dpd[2]{\area}{\nu_1}\dpd[2]{\area}{\nu_2}-
    \biggl(\dmd{\area}{}{\nu_1}{}{\nu_2}{}\biggr)^2 = \\
    \frac{1}{16} \int_{-\pi}^{\pi} J \sin^4 \varphi
    \dif\varphi \cdot \int_{-\pi}^{\pi} J \cos^4 \varphi \dif\varphi -
    \frac{1}{16} \Bigl( \int_{-\pi}^{\pi} J \sin^2 \varphi
    \cos^2 \varphi \dif\varphi \Bigr)^2.
  \end{multline}
  By the Schwarz inequality we have
  \begin{equation}
    \label{eq:13}
    \sqrt{\int_{-\pi}^{\pi} \bigl( \sqrt{J} \sin^2 \varphi \bigr)^2 \dif\varphi}
    \cdot
    \sqrt{\int_{-\pi}^{\pi} \bigl( \sqrt{J} \cos^2 \varphi \bigr)^2 \dif\varphi}
    \ge \int_{-\pi}^{\pi} J \sin^2\varphi \cos^2\varphi \dif\varphi,
  \end{equation}
  with equality precisely if the integrands on the left are
  proportional. Since this is not the case, \eqref{eq:12} is strictly
  positive as well. Hence, the Hessian of $H$ is positive definite and
  $\area(\nu_1,\nu_2)$ is strictly convex.
\end{proof}

\section{Uniqueness results}
\label{sec:uniqueness-results}

We are aware of two essentially different methods for proving
uniqueness of minimal circumscribed (or maximal inscribed) conics. One
may consider the problem as an optimization task and derive sufficient
conditions for the existence of a unique minimizer or maximizer. This
is the approach of \cite{john48:_studies_and_essays,%
  juhnke90:_min_ellipsoid,%
  juhnke94:_semi_infinite}. The second method of proof is
indirect. Assuming existence of two minimizers (or maximizers in case
of inscribed conics) $C_0$ and $C_1$ one shows existence of a further
circumscribing (or inscribed) conic $C$ of smaller (or larger)
size. This idea or variants of it can be found in
\cite{danzer57:_loewner_ellipsoid,%
  klartag04:_john_type_ellipsoids,%
  gruber08:_john_type,%
  schroecker08:_uniqueness_results_ellipsoids,%
  weber10:_davis_convexity_theorem}. In this article, we adopt it as
well. In our setup, the equation of the conic $C$ is found as a convex
combination of the respective equations of $C_0$ and~$C_1$:

\begin{definition}[in-between conic]
  \label{def:1}
  Let $C_0$ and $C_1$ be two conics
  \begin{equation}
    \label{eq:14}
    C_i = \{ x \in S^2 \colon x^\tp \cdot M_i \cdot x = 0 \}, \quad i
    = 0, 1
  \end{equation}
  such that the matrices $M_i$ are indefinite and have precisely one
  negative eigenvalue $\nu_{3,i} = -1$. For $\lambda \in (0,1)$ we
  define the \emph{in-between conic} $C_\lambda$ to $C_0$ and $C_1$ as
  \begin{equation}
    \label{eq:15}
    C_\lambda = \{x \in S^2 \colon x^\tp \cdot M_\lambda \cdot x = 0\},
  \end{equation}
  with
  \begin{equation}
    \label{eq:16}
    M_\lambda = (1-\lambda) M_0 + \lambda M_1.
  \end{equation}
\end{definition}

We also use the symbolic notation $C_\lambda = (1-\lambda)C_0 +
\lambda C_1$. As long as $C_0$ and $C_1$ have a common interior,
$C_\lambda$ is a non-degenerate conic whose interior contains the
common interior of $C_0$ and $C_1$. In general, the unique negative
eigenvalue of $M_\lambda$ is different from $-1$ (in fact larger than
$-1$ as the smallest eigenvalue is a concave function of
$\lambda$). This hinders the usage of \eqref{eq:9} and accounts for
most difficulties in the general proof of uniqueness. If the conics
$C_0$ and $C_1$ have the same axes or the same center the situation is
much simpler.

\subsection{Coaxial and concentric conics}
\label{sec:coaxial-and-concentric-conics}

We call a subset $F$ of the elliptic plane bounded, if it is contained
in a circle and we call it full-dimensional if it is not contained in
a line. In this section we prove that any bounded, compact and
full-dimensional subset $F$ of the elliptic plane can be enclosed by a
\emph{unique} conic of minimal area with prescribed axes or center.
The proofs of uniqueness are simple and follow the general scheme
outlined in
\cite{schroecker08:_uniqueness_results_ellipsoids}. Nonetheless, the
concentric case constitutes the basis for the much deeper general
uniqueness result in Section~\ref{sec:general-case}.

\begin{theorem}
  \label{th:1}
  Let $F$ be a bounded, compact and full-dimensional subset of the
  elliptic plane. Among all conics with two given axes that contain
  $F$ there exists exactly one of minimal area.
\end{theorem}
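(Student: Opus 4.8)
The plan is to prove uniqueness via the indirect method combined with the convexity of the area function established in Lemma~\ref{lem:1}. Since the two axes $X$ and $Y$ are prescribed, I would first argue that every conic containing $F$ with these axes is determined by a matrix $M$ that is diagonal in the fixed coordinate frame adapted to $X$ and $Y$. Indeed, prescribing both axes fixes the eigenvectors $x$, $y$, and $c = x \vee y$ (up to labelling), so after an orthogonal change of coordinates we may assume all competing conics have the normal form $M = \diag(\nu_1, \nu_2, -1)$ with $\nu_1, \nu_2 > 0$. The key point is that the family of admissible conics is then parametrized by the single pair $(\nu_1, \nu_2)$ ranging over a region of $\R^2$.

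Next I would identify the set of admissible parameters. A conic $C = C(\nu_1,\nu_2)$ contains $F$ precisely when every point $p \in F$ satisfies $p^\tp M p \le 0$; for each fixed $p$ this is a linear inequality in $(\nu_1, \nu_2)$, so the feasible set $\mathcal{A} = \{(\nu_1,\nu_2) : F \subseteq \overline{C(\nu_1,\nu_2)}\}$ is an intersection of closed half-planes, hence closed and convex. Boundedness and full-dimensionality of $F$ guarantee that $\mathcal{A}$ is nonempty and that minimizing sequences stay within the admissible region $\nu_1, \nu_2 > 0$ (a degenerate limit would force $F$ into a line or an unbounded set, contradicting the hypotheses). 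Existence of a minimizer then follows from compactness of sublevel sets of the continuous area function \eqref{eq:9} restricted to $\mathcal{A}$.

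For uniqueness I would invoke strict convexity directly. Suppose $C_0 = C(\nu_1^0,\nu_2^0)$ and $C_1 = C(\nu_1^1,\nu_2^1)$ are two distinct minimizers in $\mathcal{A}$, both of area $A^\ast$. Because $\mathcal{A}$ is convex, the in-between parameters $(1-\lambda)(\nu_1^0,\nu_2^0) + \lambda(\nu_1^1,\nu_2^1)$ also lie in $\mathcal{A}$ for $\lambda \in (0,1)$, and since both conics share the prescribed axes their matrices are simultaneously diagonal, so $M_\lambda = (1-\lambda)M_0 + \lambda M_1$ is again in normal form with third eigenvalue exactly $-1$. Thus the in-between conic $C_\lambda$ is itself an admissible competitor whose area is given by \eqref{eq:9} evaluated at the convex combination. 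By the strict convexity asserted in Lemma~\ref{lem:1}, its area is strictly less than $(1-\lambda)A^\ast + \lambda A^\ast = A^\ast$, contradicting minimality. Hence the minimizer is unique.

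The main obstacle I anticipate is the careful bookkeeping in the second step: verifying that the prescribed axes really do force simultaneous diagonalizability so that the third eigenvalue stays fixed at $-1$ along the segment, which is exactly what makes \eqref{eq:9} applicable. This is the feature the authors emphasize will fail in the general case, where the negative eigenvalue drifts above $-1$ and the naive convexity argument breaks down. In the coaxial setting here, however, this complication does not arise, so once the reduction to diagonal matrices is in place the uniqueness is an immediate consequence of Lemma~\ref{lem:1}.
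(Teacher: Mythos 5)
Your proposal is correct and follows essentially the same route as the paper: reduce both competitors to simultaneously diagonal matrices with third eigenvalue fixed at $-1$, form the in-between conic $C_\lambda$, and apply the strict convexity of the area function from Lemma~\ref{lem:1} to contradict minimality. Your extra observation that the feasibility condition $p^\tp M p \le 0$ is affine in $(\nu_1,\nu_2)$, so the admissible set is convex and $C_\lambda$ automatically contains $F$, is a detail the paper leaves implicit; the only slip is the harmless notational one writing $c = x \vee y$ for the center, which is a point rather than the line spanned by $x$ and $y$.
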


\begin{proof}
  Existence is a direct consequence of compactness and boundedness of
  $F$ and continuity of the area function. In order to show
  uniqueness, assume $C_0$ and $C_1$ are two minimal conics with
  prescribed axes and circumscribing $F$. Because $F$ is
  full-dimensional, both $C_0$ and $C_1$ are not degenerate.  In a
  suitable coordinate frame we can describe them by diagonal matrices
  \begin{equation}
    \label{eq:17}
    M_i = \diag(\nu_{i,1}, \nu_{i,2}, -1),\quad
    \nu_{i,1} \ge \nu_{i,2} > 0,\quad
    i = 0,1.
  \end{equation}
  The in-between conic $C_\lambda$ is then given by
  \begin{equation}
    \label{eq:18}
    M_\lambda = \diag\bigl( (1-\lambda) \nu_{0,1} + \lambda \nu_{1,1},\
    (1-\lambda) \nu_{0,2} + \lambda \nu_{1,2}, -1 \bigr).
  \end{equation}
  Because the area function \eqref{eq:9} is strictly convex we have
  \begin{equation}
    \label{eq:19}
    \area \circ\, w \circ e(M_\lambda) <
    (1-\lambda) \area \circ\, w \circ e(M_0) + \lambda \area \circ\, w \circ e(M_1)
  \end{equation}
  (the functions $e$ and $w$ are defined in \eqref{eq:3} and
  \eqref{eq:4}, respectively).  Hence, the area of $C_\lambda$ is
  strictly smaller than that of $C_0$ and $C_1$\,---\,a contradiction
  to the assumed minimality of $C_0$ and~$C_1$.
\end{proof}

Uniqueness of minimal enclosing conics among all conics with
prescribed center follows again from the strict convexity of
\eqref{eq:9} and

\begin{proposition}[Davis' Convexity Theorem]
  A convex, lower semi-continuous and symmetric function $f$ of the
  eigenvalues of a symmetric matrix is (essentially strict) convex on
  the set of symmetric matrices if and only if its restriction to the
  set of diagonal matrices is (essentially strict) convex.
\end{proposition}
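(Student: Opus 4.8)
The statement is an equivalence, so I would treat the two directions separately; the forward direction is immediate and the backward direction carries all the content. For the forward direction, observe that the diagonal matrices form a linear subspace of the space of symmetric matrices, and that the restriction of $F := f \circ e$ to it is, up to the symmetry of $f$ (which absorbs the rearrangement into decreasing order), just $f$ itself. Since the restriction of a convex — respectively essentially strictly convex — function to a subspace retains that property, (essential strict) convexity of $F$ yields the same for $f$ at once.

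The substance is the converse: assuming $f$ convex, symmetric and lower semi-continuous, I must show $F$ is convex on symmetric matrices. I would lean on two classical facts. First, Ky Fan's variational characterization $\sum_{i\le k} e(A)_i = \max\{\operatorname{tr}(AP) : P = P^\tp,\ P^2 = P,\ \operatorname{rank}P = k\}$ gives, for symmetric $A$, $B$ and $\lambda \in (0,1)$, the majorization
\[ e\bigl((1-\lambda)A + \lambda B\bigr) \prec (1-\lambda)\,e(A) + \lambda\,e(B), \]
the partial sums on the left being dominated by those on the right and the full sums agreeing by additivity of the trace (both sides are already sorted decreasingly, since positive combinations of decreasing vectors are decreasing). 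Second, a symmetric convex function is Schur-convex: by the Hardy--Littlewood--P\'olya theorem any $u$ with $u \prec v$ lies in the convex hull of the permutations of $v$, so symmetry and convexity give $f(u) \le f(v)$.

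Chaining these, for symmetric $A$, $B$ and $\lambda \in (0,1)$ I obtain
\[ F\bigl((1-\lambda)A + \lambda B\bigr) = f\bigl(e((1-\lambda)A + \lambda B)\bigr) \le f\bigl((1-\lambda)e(A) + \lambda e(B)\bigr) \le (1-\lambda) f(e(A)) + \lambda f(e(B)), \]
where the first inequality is Schur-convexity applied to the majorization above and the second is plain convexity of $f$. The right-hand side is $(1-\lambda)F(A) + \lambda F(B)$, so $F$ is convex.

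The hard part is the ``essentially strict'' refinement, which forces a careful analysis of the equality case. Since the vectors $e((1-\lambda)A+\lambda B)$ and $(1-\lambda)e(A)+\lambda e(B)$ are both sorted, equality in the Schur step (for $f$ strictly Schur-convex off its flat directions) forces these two sorted vectors to coincide, and by the equality case of Ky Fan's inequality this happens only when $A$ and $B$ admit a common eigenbasis, hence commute; equality in the convexity step then forces $e(A)=e(B)$ away from the affine directions of $f$. Together these give $A = B$ up to the orthogonal-conjugation invariance that $F$ necessarily carries, which is exactly the assertion of essential strict convexity. The genuine difficulty is reconciling the combinatorics of the Ky Fan equality case with the flat directions permitted by essential strict convexity; this bookkeeping is where a fully rigorous argument must do real work. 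For the present paper $f$ is the honestly strictly convex area function of Lemma~\ref{lem:1}, so the refinement specializes cleanly without the flat-direction caveat.
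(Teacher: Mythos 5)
The paper does not actually prove this proposition: it is quoted as a known result, with the convex case attributed to Davis (1957), the essentially strict refinement to Lewis (1996), and the authors explicitly skipping ``the technicalities related to the precise definition of essentially strict convexity.'' So where the paper has a citation, you have supplied an argument. Your treatment of the convex case is correct and is the standard modern route: the forward direction by restriction to the diagonal subspace (using symmetry of $f$ to absorb the sorting), and the converse by combining the Ky Fan partial-sum inequalities, which give the majorization $e((1-\lambda)A+\lambda B)\prec(1-\lambda)e(A)+\lambda e(B)$ with both sides already sorted, with the Hardy--Littlewood--P\'olya/Rado fact that a symmetric convex function is Schur-convex. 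This differs from Davis's original argument and from Lewis's, which runs through the conjugacy identity $(f\circ e)^{*}=f^{*}\circ e$; the majorization route is arguably the most elementary and is perfectly adequate here.

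The genuine soft spot is exactly the half the paper outsources to Lewis: the ``essentially strict'' refinement. Your equality analysis has the right shape but leans on two assertions that are themselves nontrivial lemmas. First, that equality in the Schur step forces the two sorted vectors to coincide requires a precise account of the flat directions of a symmetric convex function, not just the phrase ``strictly Schur-convex off its flat directions.'' Second, the equality case of the Ky Fan inequalities (existence of a simultaneous ordered eigenbasis, hence commutation) is a real theorem that needs proof or citation, and even granting it one must still check that a common ordered eigenbasis together with equal ordered spectra forces $A=B$, which is slightly delicate when eigenvalues have multiplicity. You flag this honestly, and for the application in this paper $f$ is the genuinely strictly convex area function of Lemma~\ref{lem:1}, so the caveats disappear; but as written the strict half of the proposition is a sketch rather than a proof --- which, to be fair, leaves you in the same position as the authors.
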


A proof for the convex case is given in
\cite{davis57:_convex_functions}. The extension to essentially strict
convexity is due to \cite{lewis96:_convex_analysis}. We skip the
technicalities related to the precise definition of ``essentially
strict convexity''. All prerequisites are met in our case
and all necessary conclusions can be drawn.

\begin{theorem}
  \label{th:2}
  Let $F$ be a bounded, compact and full-dimensional subset of the
  elliptic plane. Among all conics with given center that contain $F$
  there exists exactly one of minimal area.
\end{theorem}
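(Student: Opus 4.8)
The plan is to mirror the proof of Theorem~\ref{th:1}, but with the center fixed instead of the axes. Existence again follows at once from compactness and boundedness of $F$ together with continuity of the area function, so the entire content lies in uniqueness. As before, I would argue indirectly: suppose $C_0$ and $C_1$ are two distinct conics of minimal area, both containing $F$ and both having the prescribed center $c$. Since $F$ is full-dimensional, neither conic is degenerate, so each is described by an indefinite matrix $M_i$ with a single negative eigenvalue normalized to $-1$, as in Definition~\ref{def:1}.

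The key difference from the co-axial case is that two concentric conics need not be simultaneously diagonalizable: they share the center eigenvector $c$, but their axes may be rotated relative to one another. Choosing coordinates so that $c = (0,0,1)^\tp$, each $M_i$ has the block form $\diag(A_i, -1)$ where $A_i \in \R^{2\times 2}$ is symmetric and positive definite, but the $A_i$ need not be diagonal in a common frame. Consequently the argument cannot rely on \eqref{eq:9} directly, because that formula presupposes the matrix is in normal form. This is precisely where Davis' Convexity Theorem enters.

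The plan is to view the area as a function of the matrix $M$ rather than of the two eigenvalues $\nu_1,\nu_2$. Concretely, for concentric conics the relevant object is the function $A \mapsto \area(C)$ on the cone of symmetric positive definite $2\times 2$ matrices $A$, whose value depends only on the eigenvalues of $A$ (which are $\nu_1,\nu_2$). This function is symmetric in the eigenvalues, and by Lemma~\ref{lem:1} its restriction to diagonal matrices\,---\,namely $\area(\nu_1,\nu_2)$\,---\,is strictly convex. Davis' Convexity Theorem then upgrades this to (essentially strict) convexity of $A \mapsto \area$ on all symmetric matrices. Forming the in-between conic $C_\lambda = (1-\lambda)C_0 + \lambda C_1$, its matrix is $M_\lambda = \diag\bigl((1-\lambda)A_0 + \lambda A_1,\ -1\bigr)$, so the negative eigenvalue remains exactly $-1$ and the upper block is the convex combination $(1-\lambda)A_0 + \lambda A_1$. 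Applying the convexity on matrix space yields
\begin{equation}
  \label{eq:concentric-strict}
  \area(C_\lambda) < (1-\lambda)\area(C_0) + \lambda\area(C_1),
\end{equation}
and since $C_\lambda$ contains $F$ (its interior contains the common interior of $C_0$ and $C_1$, hence $F$), this contradicts the minimality of $C_0$ and $C_1$.

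The main obstacle I anticipate is verifying that the hypotheses of Davis' Convexity Theorem genuinely apply here and that the strictness survives. Two points need care. First, one must confirm that the area, regarded as a function of the symmetric matrix $A$, really is a spectral function\,---\,that it depends on $A$ only through its eigenvalues and is invariant under the residual rotations fixing $c$\,---\,which is exactly the rotation-invariance of the area noted in Section~\ref{sec:preliminaries}. Second, the notion of \emph{essentially strict} convexity is weaker than strict convexity, so one must check that strictness is not lost along the one-dimensional directions where it could degenerate; in the concentric setting the only such directions correspond to $C_0 = C_1$, so the inequality \eqref{eq:concentric-strict} is strict whenever the two conics differ. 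Because the excerpt already asserts that ``all prerequisites are met in our case,'' the proof can invoke the theorem directly, and the remaining work is simply to assemble the block structure of $M_\lambda$ and quote \eqref{eq:concentric-strict}.
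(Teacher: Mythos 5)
Your proposal matches the paper's proof: both reduce the concentric case to the block form $M_i = \diag(A_i,-1)$, note that the in-between matrix keeps the negative eigenvalue equal to $-1$, and invoke Davis' Convexity Theorem together with the strict convexity of $\area(\nu_1,\nu_2)$ from Lemma~\ref{lem:1} to obtain the strict inequality that contradicts minimality. The paper states this more tersely, but the argument is the same.
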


\begin{proof}
  The proof is similar to that of Theorem~\ref{th:1}. Instead of the
  diagonal matrices \eqref{eq:17} and \eqref{eq:18} we have matrices
  of the shape
  \begin{equation}
    \label{eq:20}
    M_i =
    \begin{pmatrix}
      \star & \star & 0 \\
      \star & \star & 0 \\
      0     & 0     & -1
    \end{pmatrix},
    \quad i \in \{0,1,\lambda\}.
  \end{equation}
  Davis' Convexity Theorem guarantees strict convexity of the function
  $\area \circ\, w \circ e$ on the space of matrices of
  type~\eqref{eq:20}.
\end{proof}

\subsection{The general case}
\label{sec:general-case}

Now we come to the general case. Here, we cannot make use of Davis'
Convexity Theorem since the negative eigenvalue of the matrix
$M_\lambda$ is different from $-1$ and the area can no longer be
regarded as convex function in the positive eigenvalues of
$M_\lambda$. In fact, there exist situations where
\begin{equation}
  \label{eq:21}
  \area(C_\lambda) > \area(C_0) = \area(C_1)
\end{equation}
for all in-between conics $C_\lambda$. We present an example of this:

\begin{example}
  \label{ex:1}
  Let $C_0$ and $C_1$ be two congruent conics described by
  \begin{equation}
    \label{eq:22}
    \textstyle
    M_0 = \diag(\frac{1}{16}, \frac{1}{36}, -1)
    \quad\text{and}\quad
    M_1 = R_1 \cdot R_2 \cdot R_3 \cdot M_0 \cdot (R_1 \cdot R_2 \cdot R_3)^\tp
  \end{equation}
  where $R_1$, $R_2$, $R_3$ are the rotation matrices
  \begin{equation}
    \label{eq:23}
    \textstyle
    R_1 =
    \left(
    \begin{smallmatrix}
      1 & 0                 & 0                  \\
      0 & \cos\frac{\pi}{60} & -\sin\frac{\pi}{60} \\
      0 & \sin\frac{\pi}{60} & \phantom{-}\cos\frac{\pi}{60}
    \end{smallmatrix}\right),\
    R_2 =
    \left(
    \begin{smallmatrix}
      \cos\frac{\pi}{36} & 0 & -\sin\frac{\pi}{36} \\
      0                  & 1 & 0                   \\
      \sin\frac{\pi}{36} & 0 & \phantom{-}\cos\frac{\pi}{36}
    \end{smallmatrix}\right),\
    R_3 =
    \left(
    \begin{smallmatrix}
      \cos\frac{\pi}{6} & -\sin\frac{\pi}{6}           & 0 \\
      \sin\frac{\pi}{6} & \phantom{-}\cos\frac{\pi}{6} & 0 \\
      0                 & 0                            & 1
    \end{smallmatrix}\right).
  \end{equation}
  The two conics are congruent (hence of equal area) and have a
  non-empty common interior. Figure~\ref{fig:sph_CounterExample},
  left, displays them together with a few in-between
  conics. Figure~\ref{fig:sph_CounterExample}, right, shows a plot of
  the area function of $C_\lambda$ on the interval $(0,1)$. We see
  that the area of any in-between conic is larger than $\area C_0 =
  \area C_1$.
\end{example}

Example~\ref{ex:1} illustrates the difficulties we have to expect when
proving uniqueness results for non concentric conics in the elliptic
plane. Additional assumptions on the enclosed set $F$ are inevitable
at least for our method of proof which is based on in-between conics
of Definition~\ref{def:1}.

\begin{figure}
  \begin{minipage}[b]{0.5\linewidth}
    \rule{\linewidth}{0pt}
    \centering
    \includegraphics{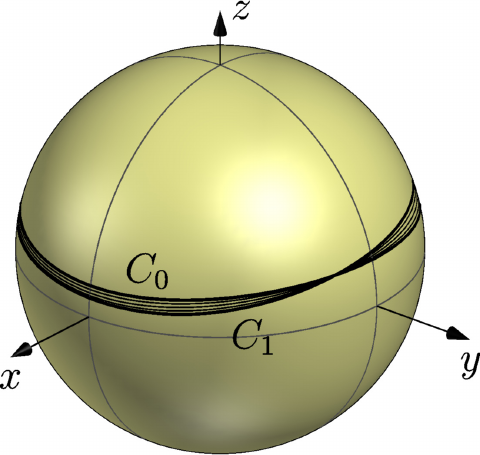}
    \caption{$C_0$, $C_1$ and $C_\lambda$ for $\lambda=0.2, 0.5,
      0.8$}
    \label{fig:sph_CounterExample}
  \end{minipage}%
  \begin{minipage}[b]{0.5\linewidth}
    \rule{\linewidth}{0pt}
    \centering
    \includegraphics{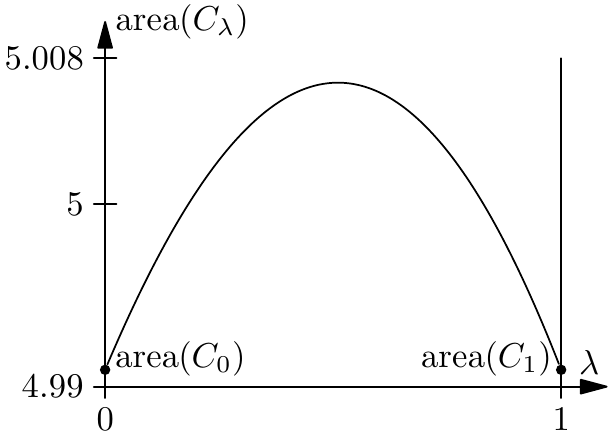}
    \caption{Area of the in-between conics}
    \label{fig:sph_CounterExampleSize}
  \end{minipage}
\end{figure}

The behaviour illustrated in Example~\ref{ex:1} is in contrast to the
situation in the Euclidean plane, where a convexity property of the
size function similar to Lemma~\ref{lem:1} guarantees that the
in-between ellipsoids $C_\lambda$ can be translated so that they are
completely contained in $C_0$ or $C_1$ (the ``Translation Lemma~6'' of
\cite{schroecker08:_uniqueness_results_ellipsoids}). In particular,
the size of every in-between ellipsoid is strictly smaller than the
size of $C_0$ and~$C_1$ (for \emph{any} reasonable size function). An
important observation is that the conics $C_0$ and $C_1$ in
Example~\ref{ex:1} are rather large and ``far away'' from a pair of
Euclidean conics. Thus, one might hope that uniqueness via in-between
conics can be shown for sufficiently small conics. This is indeed the
case. A precise formulation is given below. It requires an auxiliary
result:

\begin{lemma}
  \label{lem:unique-zero}
  The function
  \begin{equation}
    \label{eq:24}
    J(v) = \int_0^w \frac{1+v-3t^2}{\sqrt{1-t^2}} \dif t +
           \int_w^1 \frac{(1+v-3t^2)\sqrt{1+v}}{\sqrt{1-t^2}\sqrt{1+v-t^2}} \dif t
    \quad\text{with}\quad
    w = \sqrt{\frac{1+v}{3}}
  \end{equation}
  is strictly monotone increasing in $[0,2]$. In particular, it has
  precisely one zero $v_0 \approx 0.685935$ in this interval.
\end{lemma}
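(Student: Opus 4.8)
The plan is to prove strict monotonicity by showing that $J'(v)>0$ on $(0,2)$, and then to obtain the unique zero from a sign change together with the intermediate value theorem. The whole argument hinges on one structural feature of the definition of $w$, so I would foreground that from the start.

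First I would differentiate $J$. Writing the two integrands as $f(t,v) = (1+v-3t^2)/\sqrt{1-t^2}$ and $g(t,v) = (1+v-3t^2)\sqrt{1+v}\,/(\sqrt{1-t^2}\,\sqrt{1+v-t^2})$, Leibniz's rule for the $v$-dependent splitting point $w$ produces the boundary contributions $f(w,v)\,w'(v)$ from the first integral and $-g(w,v)\,w'(v)$ from the second. The decisive observation is that $w$ is defined exactly so that the common factor $1+v-3t^2$ vanishes at $t=w$, since $1+v-3w^2=0$. Hence $f(w,v)=g(w,v)=0$, the boundary terms cancel, and the differentiation reduces to differentiating under the integral sign. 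This is where the awkward-looking $v$-dependence of the splitting point turns into an advantage rather than the main obstacle.

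Next I would compute the two derivatives. The first is immediate, $\Part{f}{v} = 1/\sqrt{1-t^2}>0$. For the second, setting $u=1+v$, a short computation gives
\[
  \Part{g}{v} = \frac{2u^2 - 3u t^2 + 3t^4}{2\sqrt{1-t^2}\,\sqrt{u}\,(u-t^2)^{3/2}}.
\]
Read as a quadratic in $u$, the numerator has positive leading coefficient and discriminant $9t^4 - 24t^4 = -15t^4 < 0$, so it is strictly positive throughout the region of integration (where $u-t^2>0$). Consequently both integrands in $J'(v)$ are positive and $J'(v)>0$, which proves strict monotonicity. Differentiation under the integral sign is justified by dominating the $v$-derivatives on compact subintervals of $(0,2)$; the only singularity, at $t=1$, is integrable for $v>0$.

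Finally, for the zero I would exploit the endpoints. At $v=2$ we have $w=1$, the second integral is empty, and $J(2)=\int_0^1 3\sqrt{1-t^2}\,\dif t = 3\pi/4>0$. As $v\to 0^+$ the factor $\sqrt{1+v-t^2}$ collapses against $\sqrt{1-t^2}$ near $t=1$, leaving an integrand behaving like $(1-3t^2)/(1-t^2)$ with negative numerator and non-integrable singularity, so that $J$ is negative for small $v$. By strict monotonicity and continuity on $(0,2)$ there is therefore exactly one zero $v_0$, and a numerical evaluation pins it at $v_0\approx 0.685935$. The only genuine obstacle is the bookkeeping around the moving splitting point and the endpoint singularity; once the boundary terms are seen to cancel, positivity of $J'$ rests entirely on the negative discriminant of a single quadratic form.
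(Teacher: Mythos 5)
Your proposal is correct and follows essentially the same route as the paper: the core of both arguments is the positivity of the $v$-derivative of the second integrand (your numerator $2u^2-3ut^2+3t^4$ with $u=1+v$ is exactly the paper's $2v^2+(4-3t^2)v+3t^4-3t^2+2$), and your Leibniz-rule treatment of the moving splitting point, with boundary terms cancelling because $1+v-3w^2=0$, is just the differential form of the paper's observation that shifting $w$ only adds non-negative, respectively removes negative, contributions. Your explicit endpoint analysis ($J(2)=3\pi/4$ and $J(v)\to-\infty$ as $v\to0^+$) supplies the sign change that the paper leaves to numerical evaluation, which is a welcome small addition rather than a different approach.
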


\begin{proof}
  Denote the first integral in \eqref{eq:24} by $J_1(v)$ and the
  second by $J_2(v)$. Clearly, $J_1(v)$ is strictly monotone
  increasing in $v$ because the integrand and the upper integration
  bound are strictly monotone increasing. The integrand of $J_2(v)$ is
  negative. Thus, increasing the lower integration bound $w$ will also
  increase the integral. Moreover, the second integrand is strictly
  monotone increasing in $v$ as well. In order to see this, we compute
  its first derivative with respect to $v$:
  \begin{equation}
    \label{eq:25}
    \frac{2v^2+(4-3t^2)v+3t^4-3t^2+2}
         {2\sqrt{1-t^2}\sqrt{1+v}\;(1+v-t^2)^{3/2}}.
  \end{equation}
  The denominator is positive, the numerator is strictly monotone
  increasing in $v$ and, for $v = 0$, attains the positive value
  $3t^4-3t^2+2$. Thus, the derivative is positive. This implies that
  the integrand of $J_2$ is strictly monotone increasing and the same
  is true for the function $J$ defined in~\eqref{eq:24}.
\end{proof}

\begin{theorem}
  \label{th:3}
  Denote the unique zero of $\eqref{eq:24}$ in $[0,2]$ by $v_0$ and
  let $\Rho = \arctan(v_0^{-1/2})$. The enclosing conic of minimal
  area of a compact subset $F$ of the elliptic plane is unique if the
  following two conditions are met:
  \begin{enumerate}
  \item The elliptic convex hull of $F$ contains a circle of radius
    $\varrho > 0$. In particular, $F$ is full-dimensional.
  \item There exists an enclosing conic of $F$ whose area is less than
    $\area(\Rho, \varrho)$, computed by means of
    Equation~\eqref{eq:7}.
  \end{enumerate}
\end{theorem}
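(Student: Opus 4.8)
The plan is to argue indirectly, along the in-between-conic scheme announced above. Suppose, for contradiction, that $F$ has two distinct enclosing conics $C_0$ and $C_1$ of minimal area. By condition~(1) the elliptic convex hull of $F$, and therefore the interior of every enclosing conic, contains a fixed circle of radius $\varrho$; since the interior of the in-between conic $C_\lambda$ contains the common interior of $C_0$ and $C_1$, each $C_\lambda$ with $\lambda\in(0,1)$ encloses $F$ as well. It thus suffices to produce a single $\lambda$ with $\area(C_\lambda)<\area(C_0)=\area(C_1)$.

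First I would convert the hypotheses into bounds on the semi-axes. Containing a circle of radius $\varrho$ forces the smaller semi-axis to satisfy $\beta\ge\varrho$, because the largest circle inscribed in a conic with semi-axes $\alpha\ge\beta$ has radius $\beta$. Minimality and condition~(2) give $\area(C_i)<\area(\Rho,\varrho)$, and since $\area$ is strictly increasing in each semi-axis length, the chain $\area(\alpha,\varrho)\le\area(\alpha,\beta)<\area(\Rho,\varrho)$ yields $\alpha<\Rho$. With $\nu_3=-1$ one has $\alpha=\arctan(\nu_2^{-1/2})$ and $\Rho=\arctan(v_0^{-1/2})$, so $\alpha<\Rho$ is equivalent to $\nu_2>v_0$; as $\nu_1\ge\nu_2$, both positive eigenvalues of $M_0$ and $M_1$ exceed the threshold $v_0$. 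This is precisely the ``small conic'' regime opposite to Example~\ref{ex:1}, whose eigenvalues $\tfrac1{16},\tfrac1{36}$ lie well below $v_0$.

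Next I would set $g(\lambda):=\area(C_\lambda)$ and prove that $g$ is strictly convex; then $g(\lambda)<g(0)=g(1)$ for all $\lambda\in(0,1)$, the desired contradiction. After a rotation from $\SO$ we may take $M_0$ diagonal. Because the interpolation $M_\lambda=(1-\lambda)M_0+\lambda M_1$ does not preserve the normalization $\nu_3=-1$, rescaling $M_\lambda$ back to that slice produces a \emph{curved} path in the $(\nu_1,\nu_2)$-plane, so the convexity of Lemma~\ref{lem:1} does not transfer and one must differentiate the scale-invariant area~\eqref{eq:8} twice along the segment, tracking the motion of all three eigenvalues by first-order perturbation theory. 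The sign of $g''(\lambda)$ then reduces to the positivity of an integral which, after normalizing the negative eigenvalue and passing to the least favourable relative position of $C_0$ and $C_1$, is a positive multiple of the one-variable function $J$ of Lemma~\ref{lem:unique-zero}, evaluated at a normalized positive eigenvalue~$v$. The eigenvalue bound $\nu_2>v_0$ forces $v>v_0$, placing it in the part $(v_0,2]$ of the domain of $J$ on which Lemma~\ref{lem:unique-zero} gives $J(v)>0$, whence $g''>0$.

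The main obstacle is precisely this second-derivative computation, which the introduction flags as the bulk of the paper. Since the matrix interpolation is not scale-invariant and the negative eigenvalue drifts away from $-1$ (Example~\ref{ex:1} shows that without the size bound $g$ may even be concave), one cannot invoke Lemma~\ref{lem:1} and must instead control how the eigenvalues and eigenvectors of $M_\lambda$ vary along the segment. The two delicate steps are to collapse the a priori multivariate, configuration-dependent expression for $g''$ onto the single scalar test $J(v)>0$, and to ensure that the bound securing $v>v_0$ holds throughout the segment and not merely at its endpoints; these are exactly the involved calculations relegated to the appendix.
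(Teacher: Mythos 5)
Your setup (indirect argument via in-between conics, and the translation of the hypotheses into the bounds $\beta\ge\varrho$ and $\alpha<\Rho$, i.e.\ $\nu_{0,2}>v_0$) matches the paper. But the core of your argument --- that $g(\lambda)=\area(C_\lambda)$ is \emph{strictly convex} on $[0,1]$ in the small-conic regime, proved by a second-derivative computation that ``collapses onto'' the scalar test $J(v)>0$ --- is a genuine gap. You assert this reduction rather than perform it, and it is not what the function $J$ of Lemma~\ref{lem:unique-zero} does in the paper: there $J$ enters only through Lemma~\ref{lem:5}, to establish the ordering $B<A<C<0$ of certain combinations of the elliptic integrals $\ellE$, $\ellK$ that appear in the \emph{first} derivative $\od{\area(C_\lambda)}{\lambda}\big|_{\lambda=0}$. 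The paper never computes, and never needs, a second derivative; it proves only the local statement \eqref{eq:27}, that the first derivative at $\lambda=0$ is negative. Global convexity of $g$ is a much stronger claim, it is nowhere established, and your own strategy founders on the point you flag yourself: to control $g''(\lambda)$ for $\lambda\in(0,1)$ you would need the eigenvalue bound $\nu_2(\lambda)>v_0$ along the whole segment, which does not follow from the hypotheses (which constrain only $C_0$, $C_1$, and conics of minimal area). The paper's purely local approach at $\lambda=0$ avoids this entirely.

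You also omit two steps that the local approach requires and that your convexity approach would not obviously supply. First, since $g(0)=g(1)$ does not by itself force $g'(0)<0$ (Example~\ref{ex:1} shows $g$ can increase away from both endpoints), the paper must first order the conics so that the eigenvalues nest as in \eqref{eq:26}, $\nu_{0,1}>\nu_{1,1}\ge\nu_{1,2}>\nu_{0,2}$, after disposing separately of the case of two congruent circles; this choice of which conic is labelled $C_0$ is what makes \eqref{eq:27} provable. Second, the actual negativity of \eqref{eq:27} is obtained by comparison with a concentric configuration via the Half-Turn Lemma (Lemma~\ref{lem:half-turn}): the difference \eqref{eq:45} is written as a quartic in $t=\tan(\zeta/2)$ and shown non-positive through the signs of its Bernstein coefficients \eqref{eq:47}--\eqref{eq:51}, using $B<A<C<0$; the concentric case itself is then handled by Davis' convexity as in Theorem~\ref{th:2}. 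None of this machinery is replaced by anything concrete in your proposal, so the decisive analytic content of the theorem is missing.
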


Note that the requirements of this theorem imply restrictions on the
set $F$ and all candidates for enclosing conics of minimal
area:
\begin{itemize}
\item The diameter of $F$ is less than $\Rho$, that is, $F$ is bounded
  by a fixed value derived from the function $J(v)$.
\item Any enclosing conic has a minor semi-axis lengths $\beta \ge
  \varrho$ and any minimal enclosing conic has a major semi-axis
  length $\alpha < \Rho$. In other words, if we insert the value $v =
  1/\tan^2\alpha$ into the function $J(v)$, the result is positive.
\end{itemize}

The basic idea of the proof is not different from the proofs of
Theorems~\ref{th:1} and \ref{th:2} but the details are more
involved. Existence of the minimal enclosing conic follows from the
usual compactness argument. In order to prove uniqueness, we assume
existence of two conics $C_0$ and $C_1$ of minimal area that contain
$F$. Note that their minor semi-axis length is not smaller than
$\varrho$ and their major semi-axis lengths is smaller than $\Rho$. We
show existence of an in-between conic $C_\lambda$ such that
$\area(C_\lambda) < \area(C_0) = \area(C_1)$. This we do by proving
that $\area(C_\lambda)$ is strictly monotone decreasing in the
vicinity of $\lambda = 0$ (possibly after interchanging $C_0$ and
$C_1$). Thus, we obtain a contradiction to the assumed minimality of
$C_0$ and $C_1$. In order to show that $\area(C_\lambda)$ is strictly
monotone decreasing in the vicinity of $\lambda = 0$, we compare the
derivative of the area function with respect to $\lambda$ to the
derivative of the area function in a suitably constructed case with
concentric conics (Lemma~\ref{lem:half-turn}). The details of this
proof span until the end of this section. Auxiliary results of
technical nature are proved in the appendix.

\subsubsection{Assumptions on the semi-axis lengths}
\label{sec:assumptions-semi-axis-lengths}

For $i = 0$, $1$ we denote the matrix describing the conic $C_i$ by
$M_i$. Its eigenvalues are $\nu_{i,1} \ge \nu_{i,2} > 0$ and
$\nu_{i,3} = -1$. It is no loss of generality to make a few
assumptions on these values:

If $\nu_{0,1} = \nu_{0,2}$ and $\nu_{1,1} = \nu_{1,2}$, equality of
areas of $C_0$ and $C_1$ implies $\nu_{0,1} = \nu_{0,2} = \nu_{1,1} =
\nu_{1,2}$. In this case both conics are congruent circles with two
real intersection points $s_1$, $s_2$. There exists a circle with
$s_1$ and $s_2$ as end-points of a diameter which also contains the
common interior of $C_0$ and $C_1$. It is smaller than $C_0$ and $C_1$
and thus contradicts the assumed minimality of these conics.
Henceforth, we exclude equality of all four positive eigenvalues of
$M_0$ and~$M_1$. Then equality of areas of $C_0$ and $C_1$ implies
that these eigenvalues can be nested (possibly after interchanging
$C_0$ and $C_1$) according to
\begin{equation}
  \label{eq:26}
  \nu_{0,1} > \nu_{1,1} \ge \nu_{1,2} > \nu_{0,2}.
\end{equation}

\subsubsection{Derivative of the area function}
\label{sec:derivtive-area-function}

As already mentioned, a contradiction to the assumed minimality of
$C_0$ and $C_1$ arises if we can show that
\begin{equation}
  \label{eq:27}
  \od{\area(C_\lambda)}{\lambda} \Big| _{\lambda=0} < 0.
\end{equation}
The advantage of this ``local'' approach is that the derivative
\eqref{eq:27} can be computed from the derivatives of the eigenvalues
$\nu_{i,1}$ and $\nu_{i,2}$ with respect to $\lambda$ and that these
derivatives do not require explicit expressions of the eigenvalues as
functions of~$\lambda$.

We assume that $C_0$ is of the normal form \eqref{eq:2} and $C_1$ is
obtained from a conic in this normal form by a rotation about an axis
through the center of $S^2$, that is,
\begin{equation}
  \label{eq:28}
  M_0 =
  \begin{pmatrix}
    \nu_{0,1} & 0         & 0 \\
    0         & \nu_{0,2} & 0 \\
    0         & 0         & -1
  \end{pmatrix}, \quad M_1 = Q \cdot
  \begin{pmatrix}
    \nu_{1,1} & 0         & 0 \\
    0         & \nu_{1,2} & 0 \\
    0         & 0         & -1
  \end{pmatrix} \cdot Q^{-1},
\end{equation}
with the rotation matrix
\begin{equation}
  \label{eq:29}
  Q =
  \begin{pmatrix}
    q_0^2 + q_1^2 - q_2^2 - q_3^2 & 2(q_1 q_2 - q_0 q_3)          & 2(q_1 q_3 + q_0 q_2) \\
    2(q_1 q_2 + q_0 q_3)          & q_0^2 - q_1^2 + q_2^2 - q_3^2 & 2(q_2 q_3 - q_0 q_1) \\
    2(q_1 q_3 - q_0 q_2)          & 2(q_2 q_3 + q_0 q_1)          & q_0^2 - q_1^2 - q_2^2 + q_3^2
  \end{pmatrix}
\end{equation}
and $q_0^2 + q_1^2 + q_2^2 + q_3^2 = 1$. The rotation angle $\theta$
is given by $q_0 = \cos 2\theta$, the axis direction is
$(q_1,q_2,q_3)^\tp$.  The matrix $M_\lambda$ of the in-between conic
is computed according to \eqref{eq:16}. Its ordered eigenvalues
$\nu_1(\lambda) \ge \nu_2(\lambda) \ge \nu_3(\lambda)$ are functions
of $\lambda$ and in the vicinity of $\lambda = 0$ we have
$\nu_1(\lambda) > \nu_2(\lambda) > 0 > \nu_3(\lambda)$. The
eigenvalues are implicitly defined as roots of the characteristic
polynomial $P(\lambda, \nu(\lambda)) = \det(M_\lambda - \nu I_3)$ of
$M_\lambda$, $I_3$ being the three by three identity matrix. We know
the values of these roots for $\lambda=0$:
\begin{equation}
  \label{eq:30}
  \nu_1(0) = \nu_{0,1}, \quad \nu_2(0) = \nu_{0,2}, \quad \nu_3(0) = -1.
\end{equation}
By implicit derivation we have
\begin{equation}
  \label{eq:31}
  \dod{\nu_i}{\lambda}(0) =
  -\frac{\pd{P}{\lambda}(0, \nu_i(0))}{\pd{P}{\nu}(0, \nu_i(0))},
  \quad i = 1,2,3,
\end{equation}
which gives us the derivatives of the three eigenvalues of $M_\lambda$
at $\lambda = 0$. Furthermore, we can compute
\begin{equation}
  \label{eq:32}
  \dpd{\area(\nu_{0,1}, \nu_{0,2},
    \nu_{0,3})}{\nu_{0,i}}, \quad i = 1,2,3
\end{equation}
and, using the chain rule
\begin{equation}
  \label{eq:33}
  \dpd{\area(C_\lambda)}{\lambda}\Big|_{\lambda=0}
  = \dpd{\area}{\nu_{0,1}} \Part{\nu_{\lambda,1}}{\lambda}\Big|_{\lambda=0}
  + \dpd{\area}{\nu_{0,2}} \Part{\nu_{\lambda,2}}{\lambda}\Big|_{\lambda=0}
  + \dpd{\area}{\nu_{0,3}} \Part{\nu_{\lambda,3}}{\lambda}\Big|_{\lambda=0},
\end{equation}
we find
\begin{multline}
  \label{eq:34}
  \dpd{\area(C_\lambda)}{\lambda}\Big|_{\lambda=0} = \\
  -\frac{1}{2} \int_{-\pi}^\pi \frac{1}{N} \bigl( \sin^2\varphi (
  q_{1,1}^2 \nu_{1,1} + q_{1,2}^2 \nu_{1,2} - q_{1,3}^2 + \nu_{0,1} (
  q_{3,1}^2 \nu_{1,1} + q_{3,2}^2 \nu_{1,2} - q_{3,3}^2) )\\
  + \cos^2\varphi ( q_{2,1}^2 \nu_{1,1} + q_{2,2}^2 \nu_{1,2} -
  q_{2,3}^2 + \nu_{0,2} ( q_{3,1}^2 \nu_{1,1} + q_{3,2}^2 \nu_{1,2} -
  q_{3,3}^2) ) \bigr) \dif\varphi,
\end{multline}
where
\begin{equation}
  \label{eq:35}
  N = ( \nu_{0,1} \sin^2\varphi + \nu_{0,2} \cos^2\varphi )^{1/2}
      (1 + \nu_{0,1} \sin^2\varphi + \nu_{0,2} \cos^2\varphi )^{3/2}
\end{equation}
and $q_{i,j}$ are the entries of the rotation matrix
\eqref{eq:29}. Equation~\eqref{eq:34} expresses the derivative of the
area function with respect to $\lambda$ at $\lambda = 0$ in terms of
the two initially given conics $C_0$ and $C_1$.  It will be convenient
to write \eqref{eq:34} in terms of the first and second complete
elliptic integrals
\begin{equation}
  \label{eq:36}
  K(z) = \int_0^1 \frac{1}{\sqrt{1-t^2} \sqrt{1-z^2t^2}} \dif t
  \quad\text{and}\quad
  E(z) = \int_0^1 \frac{\sqrt{1-z^2t^2}}{\sqrt{1-t^2}} \dif t.
\end{equation}
Since we will evaluate them only at
\begin{equation}
  \label{eq:37}
  f = \sqrt{\frac{\nu_{0,1}-\nu_{0,2}}{\nu_{0,1}(1+\nu_{0,2})}},
\end{equation}
we use the abbreviations $\ellE := E(f)$ and $\ellK := K(f)$. By
\eqref{eq:26}, $f$ is always real.

Substituting $x = \nu_{0,1} \sin^2\varphi + \nu_{0,2} \cos^2\varphi$
into \eqref{eq:34} and noting that $(x-\nu_{0,1})(\nu_{0,2}-x) =
(\nu_{0,1} - \nu_{0,2})^2\cos^2\varphi \sin^2\varphi$ we can express
the derivative of the area function in terms of
elliptic integrals:
\begin{multline}
  \label{eq:38}
    \dpd{\area(C_\lambda)}{\lambda}\Big|_{\lambda=0} =
    \frac{2}{\sqrt{\nu_{0,1} (1 + \nu_{0,2})} (\nu_{0,1} - \nu_{0,2})
      (1 + \nu_{0,1})} \\
    \bigl( (1 + \nu_{0,1}) ( -\nu_{0,1} (q_{2,1}^2 \nu_{1,1} +
    q_{2,2}^2 \nu_{1,2} - q_{2,3}^2 ) + \nu_{0,2} ( q_{1,1}^2
    \nu_{1,1} + q_{1,2}^2 \nu_{1,2} - q_{1,3}^2) ) \ellK \\
    - \nu_{0,1} ( \nu_{0,1} (q_{3,1}^2 \nu_{1,1} + q_{3,2}^2
    \nu_{1,2} - q_{3,3}^2 - q_{2,1}^2 \nu_{1,1} - q_{2,2}^2 \nu_{1,2}
    + q_{2,3}^2) \\
    + \nu_{0,2} (- q_{3,1}^2 \nu_{1,1} - q_{3,2}^2 \nu_{1,2} +
    q_{3,3}^2 + q_{1,1}^2 \nu_{1,1} + q_{1,2}^2 \nu_{1,2} - q_{1,3}^2)
    \\
    + q_{1,1}^2 \nu_{1,1} + q_{1,2}^2 \nu_{1,2} - q_{1,3}^2 -
    q_{2,1}^2 \nu_{1,1} - q_{2,2}^2 \nu_{1,2} + q_{2,3}^2 ) \ellE
    \bigr).
\end{multline}

\subsubsection{The half-turn lemma}
\label{sec:half-turn-lemma}

From the proof of Theorem~\ref{th:2} we already know that
\eqref{eq:38} is negative if $C_0$ and $C_1$ are concentric. We show
negativity in the general case by comparison with a concentric
situation. This is a direct consequence of the ``Half-Turn Lemma''
below. The basic idea already occurred in
\cite{schroecker08:_uniqueness_results_ellipsoids} in form of a
``Translation Lemma''.

\begin{lemma}[Half-Turn Lemma]
  \label{lem:half-turn}
  Consider three conics $C_0$, $D_1$, $D_2$ of equal area and with
  major semi-axis lengths smaller than $\Rho$ as defined in
  Theorem~\ref{th:3}. Assume that
  \begin{itemize}
  \item $C_0$ and $D_1$ are concentric, and
  \item $D_2$ is obtained from $D_1$ by a half-turn, that is, a
    rotation through an angle of~$\pi$.
  \end{itemize}
  Then the area of $C_{\lambda,1} = (1-\lambda) C_0 + \lambda D_1$ is
  smaller than the area of $C_{\lambda,2} = (1-\lambda) C_0 + \lambda
  D_2$, at least in the vicinity of $\lambda = 0$.
\end{lemma}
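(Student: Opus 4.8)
The plan is to reduce the claimed area inequality to a comparison of first derivatives at $\lambda=0$. Since $C_{0,1}=C_{0,2}=C_0$, the two areas agree at $\lambda=0$, so it suffices to show
\[
  \dpd{\area(C_{\lambda,1})}{\lambda}\Big|_{\lambda=0}
  < \dpd{\area(C_{\lambda,2})}{\lambda}\Big|_{\lambda=0};
\]
a strict inequality of derivatives then yields $\area(C_{\lambda,1})<\area(C_{\lambda,2})$ for all sufficiently small $\lambda>0$ by a first-order Taylor expansion, which is exactly ``in the vicinity of $\lambda=0$''. Both derivatives are instances of \eqref{eq:38}, with $C_0$ in its normal form \eqref{eq:2} and the second conic equal to $D_1$, respectively $D_2$.

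The structural observation I would exploit is that \eqref{eq:38} depends on the second conic only through the three diagonal entries of its matrix. Indeed, writing $M$ for the matrix of the second conic, $D=\diag(\nu_{1,1},\nu_{1,2},-1)$, and $Q$ for the rotation \eqref{eq:29}, the combinations $q_{i,1}^2\nu_{1,1}+q_{i,2}^2\nu_{1,2}-q_{i,3}^2$ occurring in \eqref{eq:38} are precisely the diagonal entries $m_i=(Q D Q^\tp)_{ii}=M_{ii}$. Hence \eqref{eq:38} may be written as an explicit \emph{linear} functional $L(m_1,m_2,m_3)$ whose coefficients are built from $\nu_{0,1},\nu_{0,2}$ and the elliptic integrals $\ellE,\ellK$, multiplied by the positive prefactor guaranteed by \eqref{eq:26}. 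Because conjugation by a rotation preserves the trace, $D_1$ and $D_2$ share the value $m_1+m_2+m_3=\nu_{1,1}+\nu_{1,2}-1$, so their diagonal triples differ by a vector $\delta=(\delta_1,\delta_2,\delta_3)$ with $\delta_1+\delta_2+\delta_3=0$, and the quantity to be signed collapses to the single number $\Delta=L(\delta)$.

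I would then compute the two triples explicitly. For the concentric conic $D_1$ the center $(0,0,1)^\tp$ is an eigenvector for the eigenvalue $-1$, so $m_3^{(1)}=-1$, while $m_1^{(1)},m_2^{(1)}$ arise from the planar rotation about the center axis. For the half-turn I would use that a rotation through $\pi$ about a unit axis $\mathbf n$ has the symmetric involutory matrix $H=2\mathbf n\mathbf n^\tp-I_3$, whence $M_{D_2}=H M_{D_1}H^\tp$ and $m_i^{(2)}=(\text{row }i\text{ of }H)\,M_{D_1}\,(\text{row }i\text{ of }H)^\tp$ are explicit quadratics in the components of $\mathbf n$. For a generic (center-moving) half-turn the point $(0,0,1)^\tp$ is no longer the $(-1)$-eigenvector of $M_{D_2}$, so $m_3^{(2)}>-1$ and $\delta_3>0$. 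Substituting $\delta$ into $L$ and eliminating $\delta_3=-\delta_1-\delta_2$ reduces $\Delta$ to an expression in $\nu_{0,1},\nu_{0,2},\nu_{1,1},\nu_{1,2}$, the axis components of $\mathbf n$, and $\ellE,\ellK$.

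The main obstacle is the final step, proving $\Delta>0$, where the two standing hypotheses must be brought to bear. Equality of the areas of the three conics ties $(\nu_{0,1},\nu_{0,2})$ to $(\nu_{1,1},\nu_{1,2})$ through \eqref{eq:9}, and the assumption that all major semi-axis lengths are smaller than $\Rho=\arctan(v_0^{-1/2})$ translates, via $v=1/\tan^2\alpha>v_0$, into the positivity furnished by Lemma~\ref{lem:unique-zero}. I expect $\Delta$, after removing the positive prefactor of \eqref{eq:38} and reducing the axis dependence to its extremal (worst) case, to be proportional to the function $J(v)$ of \eqref{eq:24}, whose strict positivity for $v>v_0$ then gives $\Delta>0$ and hence the lemma; the concentric case already known from Theorem~\ref{th:2} fixes the baseline sign. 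The delicate points I anticipate are controlling the sign uniformly over all admissible half-turn axes $\mathbf n$ and matching the $\ellE$- and $\ellK$-coefficients of $L(\delta)$ to the two summands of $J(v)$, which is precisely the technical bookkeeping deferred to the appendix.
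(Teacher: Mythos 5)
Your first step---reducing the claim to a comparison of the two derivatives at $\lambda=0$, both read off from \eqref{eq:38}---is exactly the paper's reduction, and your structural observation that \eqref{eq:38} is a linear functional of the diagonal entries $m_i=(M_1)_{ii}$ of the second conic's matrix, so that by trace invariance the difference of the two derivatives collapses to $L(\delta)$ with $\delta_1+\delta_2+\delta_3=0$, is correct and a tidy way to organize the computation. Before going further, note a sign issue: what the paper actually proves is inequality \eqref{eq:43}, i.e.\ that the \emph{concentric} in-between conic has the larger derivative at $\lambda=0$ and hence the larger area for small $\lambda>0$; this is also the direction Theorem~\ref{th:3} needs, since the general in-between conic must be squeezed \emph{below} the concentric one, whose area is already known to drop below $\area(C_0)$. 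The printed statement of the lemma has the conclusion reversed, and you have adopted the printed direction; if you carried your computation through you would land on the opposite sign of the one you set out to establish. That is a slip in the statement rather than in your reasoning, but it must be resolved.

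The genuine gap is that the entire difficulty of the lemma lies in the step you only anticipate: showing that $L(\delta)$ has a fixed sign \emph{uniformly} in the orientation angle $\zeta$ of $D_1$ relative to $C_0$ and in the half-turn axis $r$. Your expectation that the difference ``reduces to'' the function $J(v)$ of \eqref{eq:24} after passing to a worst case is not how the sign analysis goes. The vector $\delta$ still depends on $\zeta$ and $r$; after the substitution $\zeta=2\arctan t$ the difference \eqref{eq:45} becomes a quartic polynomial $P(t)$ whose five Bernstein coefficients \eqref{eq:47}--\eqref{eq:51} must each be shown non-positive. That in turn requires the full chain of sign inequalities $B<A<C<0$ for the elliptic-integral combinations \eqref{eq:40} (Lemmas \ref{lem:3}, \ref{lem:4}, \ref{lem:5}) together with the normalization $r_3^2\ge r_1^2$ of the half-turn axis (Lemmas \ref{lem:6} and \ref{lem:7}). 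The function $J(v)$ and the hypothesis on the semi-axis lengths (which gives $\nu_{0,2}>v_0$) enter only through one link of this chain, namely $B<A$ in Lemma~\ref{lem:5}, not as an overall factor; and equality of the areas is used only through the eigenvalue nesting \eqref{eq:26} (making $f$ real and $B<0$), not through a functional relation between $(\nu_{0,1},\nu_{0,2})$ and $(\nu_{1,1},\nu_{1,2})$. So the proposal is a sound setup, but the uniform sign control over $(\zeta,r)$ that you correctly flag as delicate is not bookkeeping deferred to an appendix---it \emph{is} the proof, and it is missing.
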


In order to prove Lemma~\ref{lem:half-turn} it is sufficient to
compare the derivatives of the area of $C_{\lambda,1}$ and
$C_{\lambda,2}$ with respect to $\lambda$ at $\lambda = 0$. Computing
these derivatives is easily accomplished by substituting appropriate
values for the entries of the matrix \eqref{eq:29} into \eqref{eq:38}:

The conic $D_1$ can be obtained from a conic in normal form
\eqref{eq:2} by a rotation about $(0, 0, 1)^\tp$ through an angle
$\zeta$. We can compute the corresponding matrix $M_1$ as in
\eqref{eq:28} with suitable entries chosen for the matrix $Q$ in
\eqref{eq:29}. Substituting these values into Equation~\eqref{eq:38}
yields
\begin{multline}
  \label{eq:39}
  \dod{\area(C_{\lambda, 1})}{\lambda} \Big|_{\lambda=0} =\\
  \frac{1}{N_1} \big( (A \sin^2\zeta + C \cos^2\zeta)
  \nu_{1,1} + (A \cos^2\zeta + C \sin^2\zeta) \nu_{1,2} -
  B \big),
\end{multline}
where
\begin{equation}
  \label{eq:40}
  \begin{gathered}
    N_1 = \sqrt{\nu_{0,1} (1 + \nu_{0,2})} (\nu_{0,1} - \nu_{0,2}) (1 + \nu_{0,1}),\quad
    A = 2 \nu_{0,1} (1 + \nu_{0,1}) (\ellE -\ellK),\\
    B = -2 \nu_{0,1} (\nu_{0,1} - \nu_{0,2}) \ellE,\quad
    C = 2 (1 + \nu_{0,1}) \nu_{0,2} \ellK - 2 \nu_{0,1} (1 + \nu_{0,2}) \ellE.
  \end{gathered}
\end{equation}

The conic $D_2$ is obtained by a half-turn from $D_1$ about the
rotation axis defined by the unit vector $r = (r_1, r_2,
r_3)^\tp$. The point $r$ can be chosen as one of the two mid-points of
the centers of $C_0$ and $C_2$. It is no loss of generality to assume
$r_1^2 + r_2^2 - r_3^2 \le 0$ since otherwise we take the second
mid-point. In particular, we can always assume
\begin{equation}
  \label{eq:41}
  r_3^2-r_1^2 \ge 0.
\end{equation}
The matrix $Q$ in \eqref{eq:29} is the product of the rotation matrix
about $(0, 0, 1)^\tp$ through $\zeta$ and a half-turn rotation matrix
about the unit vector $r$. Substituting according entries into
Equation~\eqref{eq:34} yields
\begin{equation}
  \label{eq:42}
  \begin{gathered}
  \dod{\area(C_{\lambda, 2})}{\lambda} \Big|_{\lambda=0} = \\
  \frac{1}{N_1} \Big( \big( (A \sin^2\zeta + C
  \cos^2\zeta) \nu_{1,1} + (A \cos^2\zeta + C
  \sin^2\zeta) \nu_{1,2} - B \big) (r_1^4 + r_2^4 + r_3^4)\\
  + \big( ( - 2 C \cos^2\zeta + 4 C \sin^2\zeta - 2 A
  \sin^2\zeta + 4 A \cos^2\zeta) \nu_{1,1}\\
  + ( -2C \sin^2\zeta + 4 C \cos^2\zeta - 2 A
  \cos^2\zeta + 4 A \sin^2\zeta) \nu_{1,2} - 2 B \big)
  r_1^2 r_2^2\\
  + \big( (4 B \cos^2\zeta - 2 C \cos^2\zeta + 2 A
  \sin^2\zeta) \nu_{1,1}\\
  + (4 B \sin^2\zeta - 2 C \sin^2\zeta + 2 A
  \cos^2\zeta) \nu_{1,2} + 2 B - 4 C \big) r_1^2 r_3^2\\
  + \big( ( - 2 A \sin^2\zeta + 4 B \sin^2\zeta + 2 C
  \cos^2\zeta) \nu_{1,1}\\
  + ( -2A \cos^2\zeta + 4 B \cos^2\zeta + 2 C
  \sin^2\zeta) \nu_{1,2} - 4 A + 2 B \big) r_2^2 r_3^2\\
  + \big( 4 \cos\zeta \sin\zeta ( - C + A) \nu_{1,1} - 4
  \cos\zeta \sin\zeta ( - C + A) \nu_{1,2} \big) (r_1
  r_2^3 - r_1^3 r_2)\\
  + \big( - 4 \cos\zeta \sin\zeta ( - 2 B + C + A) \nu_{1,1} + 4
  \cos\zeta \sin\zeta ( - 2 B + C + A) \nu_{1,2} \big) r_1
  r_2 r_3^2 \Big)
  \end{gathered}
\end{equation}
with $N_1$, $A$, $B$, and $C$ as in~\eqref{eq:40}.

Now we are going to prove the inequality
\begin{equation}
  \label{eq:43}
  \dod{\area(C_{\lambda, 1})}{\lambda} \Big|_{\lambda=0} \ge
  \dod{\area(C_{\lambda, 2})}{\lambda} \Big|_{\lambda=0}
\end{equation}
under the additional assumption
\begin{equation}
  \label{eq:44}
  B < A < C < 0.
\end{equation}
This indeed holds true: $A < 0$ follows from $\ellE < \ellK$, $B < 0$
is a consequence of \eqref{eq:26}, $C < 0$ will be shown in
Lemma~\ref{lem:3} and $B < A < C$ in Lemma~\ref{lem:4} and
Lemma~\ref{lem:5}.

We substitute $\zeta = 2 \arctan t$, that is,
$\cos\zeta=(1-t^2)/(1+t^2)$, $\sin\zeta = 2t/(1+t^2)$, in
\begin{equation}
  \label{eq:45}
  \dod{\area(C_{\lambda, 2})}{\lambda} \Big|_{\lambda=0} -
  \dod{\area(C_{\lambda, 1})}{\lambda} \Big|_{\lambda=0}
\end{equation}
to obtain a rational expression in $t$. Clearing the (positive)
denominator, we arrive at a polynomial $P(t)$ of degree four in
$t$. We have to show that $P(t)$ is strictly negative on $(0,1)$. For
that purpose we use a typical technique and write
\begin{equation}
  \label{eq:46}
  P(t) = \sum_{i=0}^4 B^4_i(t) p_i
\end{equation}
where $B^4_i(t) = \binom{4}{i} (1-t)^{4-i}t^i$ is the $i$-th Bernstein
polynomial of degree four. Because of $B^4_i(t) \in [0,1]$ and
$\sum_{i=0}^4 B^4_i(t) \equiv 1$, $P(t)$ is a convex combination of
the Bernstein coefficients $p_0$, \ldots, $p_4$. Hence, the polynomial
$P(t)$ is certainly negative on $(0,1)$ if we can show that no
Bernstein coefficient $p_i$ is positive and at least one is
negative. The Bernstein coefficients are:
\begin{equation}
  \label{eq:47}
  \begin{gathered}
    p_0 =
    (C \nu_{1,1} + A \nu_{1,2} - B) \big( (r_1^2 + r_2^2 + r_3^2)^2 - 1 \big)
    + 4 r_1^2 r_2^2 (A - C) (\nu_{1,1} - \nu_{1,2}) \\
    + 4 r_1^2 r_3^2 (B - C) (1 + \nu_{1,1})
    + 4 r_2^2 r_3^2 (B - A) (1 + \nu_{1,2}).
  \end{gathered}
\end{equation}
Because of $r_1^2 + r_2^2 + r_3^2 = 1$, $B < A < C$, and $\nu_{1,1}
\ge \nu_{1,2}$ it follows that $p_0$ is not positive. It equals zero
if and only if $r_1 = r_2 = 0$ which is only possible in the
concentric case and therefore can be excluded. Thus, $p_0$ is
negative.
\begin{equation}
  \label{eq:48}
  \begin{gathered}
    p_1 = (C \nu_{1,1} + A \nu_{1,2} - B) \big( (r_1^2 + r_2^2 + r_3^2)^2 - 1 \big)
    + 4 r_1^2 r_2^2 (A - C) (\nu_{1,1} - \nu_{1,2}) \\
    + 4 r_1^2 r_3^2 (B - C) (1 + \nu_{1,1})
    + 4 r_2^2 r_3^2 (B - A) (1 + \nu_{1,2})\\
    + 2 r_1 r_2 r_3^2 (2B - A - C) (\nu_{1,1} - \nu_{1,2})
    + 2(r_1 r_2^3 - r_1^3 r_2) (A - C) (\nu_{1,1} - \nu_{1,2}).
  \end{gathered}
\end{equation}
Because of $r_1^2 + r_2^2 + r_3^2 = 1$, $B < A < C$, and $\nu_{1,1}
\ge \nu_{1,2}$ neither the first nor the second row of~\eqref{eq:48}
is positive. In Lemma~\ref{lem:6} we show that the third row is
not positive either. Thus, $p_1 \le 0$, as required.
\begin{equation}
  \label{eq:49}
  \begin{gathered}
    3p_2 = 2((A + C) (\nu_{1,1} + \nu_{1,2}) - 2B) \big(
    (r_1^2 + r_2^2 + r_3^2)^2 - 1 \big)\\
    + 8 r_1^2 r_3^2 (B - C) (2 + \nu_{1,1} + \nu_{1,2})
    + 8 r_2^2 r_3^2 (B - A) (2 + \nu_{1,1} + \nu_{1,2})\\
    + 12 r_1 r_2 r_3^2 (2B - A - C) (\nu_{1,1} - \nu_{1,2})
    + 12 (r_1 r_2^3 - r_1^3 r_2) (A - C) (\nu_{1,1} - \nu_{1,2}).
  \end{gathered}
\end{equation}
The non-positivity of $p_2$ is shown similar to that of~$p_1$.
\begin{equation}
  \label{eq:50}
  \begin{gathered}
    p_3 = 2 (A \nu_{1,1} + C \nu_{1,2} - B) \big( (r_1^2 + r_2^2 +
    r_3^2)^2 - 1 \big)
    + 8 r_1^2 r_2^2 (C - A) (\nu_{1,1} - \nu_{1,2})\\
    + 8 r_1^2 r_3^2 (B - C) (1 + \nu_{1,2})
    + 8 r_2^2 r_3^2 (B - A) (1 + \nu_{1,1})\\
    + 4 r_1 r_2 r_3^2 (2B - A - C) (\nu_{1,1} - \nu_{1,2}) + 4 (r_1
    r_2^3 - r_1^3 r_2) (A - C) (\nu_{1,1} - \nu_{1,2}).
  \end{gathered}
\end{equation}
The non-positivity of the last row is shown in Lemma~\ref{lem:6}. The
non-positivity of the first and second row is shown in
Lemma~\ref{lem:7}.
\begin{equation}
  \label{eq:51}
  \begin{gathered}
    p_4 =
    4 (A \nu_{1,1} + C \nu_{1,2} - B) \big( (r_1^2 + r_2^2 + r_3^2)^2 - 1 \big)\\
    + 16 r_1^2 r_2^2 (C - A) (\nu_{1,1} - \nu_{1,2})
    + 16 r_1^2 r_3^2 (B - C) (1 + \nu_{1,2})\\
    + 16 r_2^2 r_3^2 (B - A) (1 + \nu_{1,1}).
  \end{gathered}
\end{equation}
We show the non-positivity of the second and third row in
Lemma~\ref{lem:7}.

Summarizing we can state that no Bernstein coefficient is positive and
at least $p_0$ is negative. Therefore we conclude that
Equation~\eqref{eq:45} is valid and the Half-Turn Lemma holds
true. This also concludes the proof of Theorem~\ref{th:3}.

\section{Minimal enclosing conics of line-sets}
\label{sec:line-sets}

As a final result, we would like to present the elliptic counter-part
of the uniqueness theorem of \cite{schroecker07:_minim_hyper} for
minimal enclosing hyperbolas in the Euclidean plane. The perfect
duality between points and lines in the elliptic plane allows to
derive this result without additional work as simple corollary to
Theorems~\ref{th:1}, \ref{th:2}, and~\ref{th:3}.

The definition of a measure for a set of lines $\lSet$ that is
invariant with respect to elliptic transformations is
straightforward. The key-ingredient is the absolute polarity. We
describe it for the bundle model of the elliptic plane. Here, a
straight line $L$ is a plane through the center of $S^2$. The plane
normal defines a point $l$ which is called the \emph{absolute pole} of
$L$.  Conversely, $L$ is the \emph{absolute polar} of $l$. We write $l
= p(L)$ and $L = p(l)$.  The measure $m(\lSet)$ of a line-set $\lSet$
is defined as the area of the absolute poles of lines in $\lSet$:
\begin{equation}
  \label{eq:52}
  m(\lSet) := \area \{p(L) \mid L \in \lSet\}.
\end{equation}

A conic $C$ is said to contain a set $\lSet$ of straight lines if
every member $L \in \lSet$ has at most one real intersection point
with $C$. In this sense, one may ask for the enclosing conic of
minimal size with respect to the measure $m$
(Figure~\ref{fig:spherical-hyperbola}). As corollary to
Theorem~\ref{th:3} we can state

\begin{corollary}
  Let $\lSet \subset S^2$ be a set of lines such that its polar set
  $p(\lSet)$ satisfies the requirements of Theorem~\ref{th:1}. Among
  all conics with two given axes that contain $\lSet$ there exists
  exactly one of minimal measure.
\end{corollary}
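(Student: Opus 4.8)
The plan is to exploit the perfect duality between points and lines in the elliptic plane, so that the corollary becomes an immediate translation of Theorem~\ref{th:1} rather than a result requiring fresh analytic work. First I would make precise the correspondence induced by the absolute polarity $p$. In the bundle model a line $L$ is a plane through the origin with unit normal $l = p(L)$, and the map $p$ is an involution that commutes with the action of $\SO$, since rotations act orthogonally and hence carry plane normals to plane normals consistently. The key observation is that a conic $C = \{x \colon x^\tp M x = 0\}$ with $M = \diag(\nu_1,\nu_2,-1)$ has a \emph{dual} description: a line $L = p(l)$ meets $C$ in at most one real point exactly when its pole $l$ lies in the closed region bounded by the dual conic $C^\ast = \{y \colon y^\tp M^{-1} y = 0\}$. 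Thus ``$C$ contains the line-set $\lSet$'' is equivalent to ``$C^\ast$ contains the point-set $p(\lSet)$'' in the ordinary (point) sense of Section~\ref{sec:preliminaries}.

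Second, I would relate the line measure $m$ to the point area. By definition \eqref{eq:52}, $m(\lSet) = \area\{p(L) \mid L \in \lSet\}$, so minimizing $m(\lSet)$ over enclosing conics $C$ is the same as minimizing the ordinary area of the enclosing conic of the point-set $p(\lSet)$. Concretely, I would check that the dual $C^\ast$ of a conic with matrix $M = \diag(\nu_1,\nu_2,-1)$ is again a conic of the admissible type: $M^{-1} = \diag(\nu_1^{-1}, \nu_2^{-1}, -1)$ is indefinite with exactly one negative eigenvalue, so $C^\ast$ is a legitimate enclosing conic in the sense of Theorem~\ref{th:1}, and the passage $C \mapsto C^\ast$ is a bijection of the admissible conics. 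Moreover the constraint ``two given axes'' is self-dual: since $M$ and $M^{-1}$ share the same eigenvectors $x,y,c$, prescribing the axes $X = c \vee x$ and $Y = c \vee y$ of $C$ prescribes exactly the same axes for $C^\ast$.

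Putting these together, the minimal-measure problem for $\lSet$ with prescribed axes transforms, under $C \mapsto C^\ast$, into the minimal-area problem for the point-set $F := p(\lSet)$ with the same prescribed axes. By hypothesis $p(\lSet)$ satisfies the requirements of Theorem~\ref{th:1}, namely it is bounded, compact, and full-dimensional; compactness and boundedness are preserved because $p$ is a homeomorphism of $S^2$, and full-dimensionality transfers directly. Theorem~\ref{th:1} then yields exactly one conic of minimal area containing $F$ with the prescribed axes, and its dual is the unique conic of minimal measure containing $\lSet$ with those axes.

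The main obstacle I anticipate is not any inequality or integral estimate\,---\,all the heavy analysis already lives in Theorems~\ref{th:1}--\ref{th:3}\,---\,but rather the careful bookkeeping needed to verify that the dual conic $C^\ast$ is of the correct admissible form and that the containment notions match up precisely. In particular I would take care that ``$L$ has at most one real intersection point with $C$'' is genuinely equivalent to ``$p(L)$ lies in the closed interior of $C^\ast$'', including the boundary (tangency) case, so that the minimizing configurations correspond on the nose; this pole-polar incidence computation, though elementary, is where a sign or a strict-versus-closed subtlety could slip in and must be handled with care.
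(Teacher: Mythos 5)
Your proposal is correct and follows exactly the route the paper intends: the corollary is stated there without a separate proof, justified only by the remark that absolute polarity turns line containment, the measure $m$, and the prescribed-axes constraint into their point counterparts, so that Theorem~\ref{th:1} applies to $F = p(\lSet)$. Your write-up merely makes explicit the details the paper leaves implicit (the dual conic $C^\ast$ given by $M^{-1}$, the equivalence of ``at most one real intersection point'' with ``pole in the closed interior of $C^\ast$'', and the self-duality of the axis pair), all of which check out.
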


\begin{corollary}
  Let $\lSet \subset S^2$ be a set of lines such that its polar set
  $p(\lSet)$ satisfies the requirements of Theorem~\ref{th:2}. Among
  all conics with given center that contain $\lSet$ there exists
  exactly one of minimal measure.
\end{corollary}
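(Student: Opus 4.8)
The plan is to reduce the statement to Theorem~\ref{th:2} by means of the absolute polarity, exploiting the fact that in the bundle model this polarity is nothing but the standard polarity with respect to the unit sphere, i.e.\ with respect to the identity matrix. Concretely, the pole of a line $L = \{x \in S^2 \colon l^\tp \cdot x = 0\}$ is the point $p(L) = l$, and conversely. To every conic $C$ with matrix $M$ I associate the \emph{dual conic} $C^* = \{l \in S^2 \colon l^\tp \cdot M^{-1} \cdot l = 0\}$. Because $M$ has eigenvalues $\nu_1 \ge \nu_2 > 0$ and $\nu_3 = -1$, the matrix $M^{-1}$ has eigenvalues $\nu_1^{-1}$, $\nu_2^{-1}$ (both positive) and $-1$, so $C^*$ is again an admissible conic in the sense of~\eqref{eq:1}. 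First I would verify that $M$ and $M^{-1}$ share the same eigenvectors and that the negative eigenvalue of $M^{-1}$ belongs to the same eigenvector as the negative eigenvalue of $M$; hence $C$ and $C^*$ have the same center (and the same axes). Thus the map $C \mapsto C^*$ is an involution on conics that fixes the center, carrying the family of conics with a prescribed center onto itself.

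The heart of the argument is the following dictionary. A line $L$ with pole $l$ has at most one real intersection point with $C$ precisely when the restriction of the quadratic form $x \mapsto x^\tp \cdot M \cdot x$ to the plane $l^{\perp} = L$ is semi-definite (equivalently, not indefinite), and a short computation shows that this happens if and only if $l^\tp \cdot M^{-1} \cdot l \le 0$, that is, iff $l$ lies in the closed interior of $C^*$. Consequently the set of \emph{all} lines contained in $C$ has, as its set of poles, exactly the closed interior of $C^*$. Two consequences follow at once. First, by the very definition~\eqref{eq:52} the measure of this line-set equals $\area(C^*)$, so minimising the measure of the enclosing conic $C$ is the same as minimising $\area(C^*)$. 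Second, $C$ contains $\lSet$ if and only if $p(\lSet)$ lies in the closed interior of $C^*$, i.e.\ iff the point-conic $C^*$ encloses the point-set $p(\lSet)$ in the ordinary sense.

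Combining these observations, the map $C \mapsto C^*$ is a center-preserving bijection between the conics with the prescribed center enclosing the line-set $\lSet$ and the conics with the same center enclosing the point-set $p(\lSet)$, and it transforms the measure of the former into the area of the latter. Since $p(\lSet)$ satisfies the hypotheses of Theorem~\ref{th:2}, that theorem furnishes a \emph{unique} conic $C^*$ of minimal area enclosing $p(\lSet)$ among all conics with the given center; its preimage $C = (C^*)^*$ is then the unique enclosing conic of $\lSet$ of minimal measure. The main obstacle is not the final reduction, which is immediate, but the careful duality bookkeeping of the middle paragraph: one has to pin down that the poles of the lines contained in $C$ fill precisely the interior of $C^*$ and that the measure~\eqref{eq:52} of this line-set coincides with $\area(C^*)$. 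Everything else is routine once this dictionary between $M$ and $M^{-1}$ is in place.
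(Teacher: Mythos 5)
Your proposal is correct and follows exactly the route the paper intends: the paper offers no separate proof, stating only that the absolute polarity makes this a "simple corollary" of Theorem~\ref{th:2}, and your argument is precisely that duality worked out in detail (dual conic $M \mapsto M^{-1}$ preserving the center, lines contained in $C$ corresponding to poles in the closed interior of $C^*$, and the measure of that line-set equalling $\area(C^*)$). Your careful bookkeeping is a welcome expansion of what the paper leaves implicit, but it is the same approach.
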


\begin{corollary}
  \label{cor:3}
  Let $\lSet \subset S^2$ be a compact set of lines that is contained
  in a circle of radius $\varrho^{-1} > 0$. Assume there exists an
  enclosing conic of $\lSet$ whose measure is larger than the measure
  for the set of lines contained in a conic of semi-axes lengths
  $\Rho^{-1}$ and $\varrho^{-1}$. Then the enclosing conic of minimal
  measure $m$ to the line set $\lSet$ is unique.
\end{corollary}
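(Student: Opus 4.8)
The plan is to prove the statement not by fresh estimates but by dualizing it, through the absolute polarity $p$, into the point-version already settled in Theorem~\ref{th:3}. First I would record the dictionary between the line problem for $\lSet$ and the point problem for the polar set $F := \{p(L)\mid L \in \lSet\}$. Recall that $p$ is an inclusion-reversing involution exchanging points and lines, and that it sends a conic $C = \{x \in S^2\colon x^\tp M x = 0\}$ to its \emph{dual conic} $C^\ast = \{u \in S^2\colon u^\tp M^{-1} u = 0\}$ (with $M^{-1}$ understood up to a scalar, i.e.\ the adjugate), whose points are exactly the poles of the tangent lines of $C$.

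The key lemma I would establish is the enclosure-and-measure dictionary. A conic $C$ contains the line-set $\lSet$ (every $L \in \lSet$ meets $C$ in at most one real point) if and only if the dual conic $C^\ast$ contains the point-set $F$ in the sense of Theorem~\ref{th:3}; and, writing $m(C)$ for the measure of the set of \emph{all} lines enclosed by $C$, one has $m(C) = \area(C^\ast)$. Both halves follow from the pole/polar correspondence: a line $L$ has at most one real intersection with $C$ precisely when its pole $p(L)$ lies in the closed interior of $C^\ast$, so the enclosed lines are exactly those whose poles fill that interior, and their measure is by definition the area of the interior. Consequently $C \mapsto C^\ast$ is a bijection between the enclosing conics of $\lSet$ and the enclosing conics of $F$ under which $m(C) = \area(C^\ast)$, so minimal-measure enclosing conics of $\lSet$ correspond one-to-one with minimal-area enclosing conics of $F$.

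Next I would translate the quantitative hypotheses. Since $M^{-1} = \diag(\nu_1^{-1}, \nu_2^{-1}, -1)$ for $M = \diag(\nu_1, \nu_2, -1)$, the semi-axis lengths transform by the polar rule $(\alpha,\beta) \mapsto (\pi/2 - \beta,\ \pi/2 - \alpha)$; in particular the polar of a circle of radius $r$ is a circle of radius $\pi/2 - r$, the quantity written $r^{-1}$ in the statement. Using this rule together with the inclusion-reversing character of $p$, I would check that the two hypotheses of the corollary are the polar translates of conditions~(1) and~(2) of Theorem~\ref{th:3} applied to $F$: containment of $\lSet$ in a circle of radius $\varrho^{-1}$ matches the inradius condition~(1) on $F$, and the measure comparison in the second hypothesis translates, through $m(C) = \area(C^\ast)$ together with the double-polar identity $\area(\Rho,\varrho) = m(\text{conic of semi-axes } \Rho^{-1},\varrho^{-1})$, into the area bound~(2) for $F$.

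With the dictionary in place the conclusion is immediate: Theorem~\ref{th:3} yields a unique minimal-area enclosing conic $C^\ast_0$ of $F$, and its polar $C_0 = (C^\ast_0)^\ast$ is then the unique minimal-measure enclosing conic of $\lSet$. The only genuine work — and the step I expect to be the main obstacle — is the enclosure-and-measure dictionary of the second paragraph, because the pole/polar characterization of ``at most one real intersection point'' must be verified with care in the elliptic model, where the bi-valued distance blurs the usual inside/outside distinction, and because the complementary-radius bookkeeping $r \mapsto \pi/2 - r$ and the precise inequality directions have to be matched exactly against the constants $\varrho$ and $\Rho$ of Theorem~\ref{th:3}. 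Once these checks are done, uniqueness transfers for free, exactly as the perfect point-line duality of the elliptic plane leads one to expect.
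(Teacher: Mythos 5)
Your strategy is exactly the one the paper has in mind: the paper offers no explicit proof of Corollary~\ref{cor:3} beyond invoking the absolute polarity, and your second paragraph supplies precisely the dictionary that makes this work. That dictionary is correct: a line $L$ meets $C$ in at most one real point iff the restriction of the quadratic form to the plane $L$ is positive semi-definite, iff $p(L)^\tp \cdot M^{-1} \cdot p(L) \le 0$, i.e.\ iff $p(L)$ lies in the closed interior of the dual conic $C^\ast$; hence $C$ encloses $\lSet$ iff $C^\ast$ encloses $F = p(\lSet)$, the measure of the full line set of $C$ is $\area(C^\ast)$, and the semi-axis rule $(\alpha,\beta)\mapsto(\pi/2-\beta,\,\pi/2-\alpha)$ holds. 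Since $C\mapsto C^\ast$ is an involution, uniqueness does transfer once the hypotheses of Theorem~\ref{th:3} are verified for~$F$.

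The gap is in your third paragraph, where you assert without verification that the two hypotheses of the corollary \emph{are} the polar translates of conditions (1) and (2) of Theorem~\ref{th:3}; under your own dictionary they are not. First, ``$\lSet$ is contained in a circle of radius $\varrho^{-1}$'' dualizes to ``$F$ is contained in a circle of radius $\varrho$,'' a circumradius bound, whereas condition (1) demands that $\conv(F)$ \emph{contain} a circle of radius $\varrho$, an inradius bound; neither statement implies the other, and it is the inradius bound that the proof of Theorem~\ref{th:3} actually uses (it forces every enclosing conic of $F$ to have minor semi-axis length at least $\varrho$). Second, since $m(C)=\area(C^\ast)$ and the measure of the line set of the conic with semi-axes $\Rho^{-1},\varrho^{-1}$ equals $\area(\Rho,\varrho)$, the hypothesis ``measure \emph{larger} than \dots'' dualizes to ``there is an enclosing conic of $F$ of area \emph{larger} than $\area(\Rho,\varrho)$,'' which is the reverse of condition (2). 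So your dictionary, applied honestly, does not reduce the corollary as printed to Theorem~\ref{th:3}; to close the argument you must either replace the hypotheses by their genuine polar translates (an inradius-type condition on the line set and a ``measure less than'' bound) or supply an extra argument bridging the circumradius/inradius discrepancy. These are exactly the two checks you yourself flagged as the ``main obstacle,'' and they do not go through in the form you claim.
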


\begin{figure}
  \begin{minipage}[b]{0.5\linewidth}
    \rule{\linewidth}{0pt}
    \centering
    \includegraphics{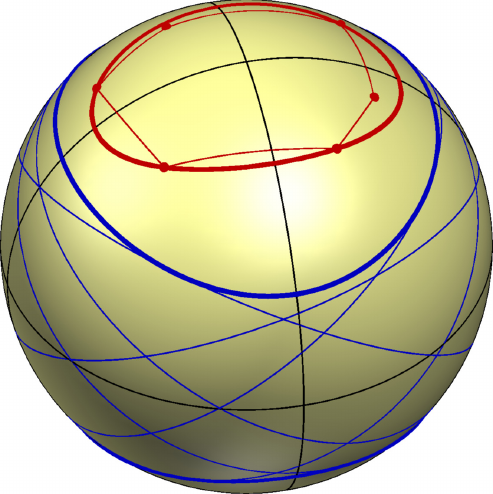}
  \end{minipage}%
  \begin{minipage}[b]{0.3\linewidth}
    \rule{\linewidth}{0pt}
    \centering
    \includegraphics{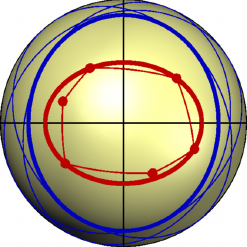}\\[1.5ex]
    \includegraphics{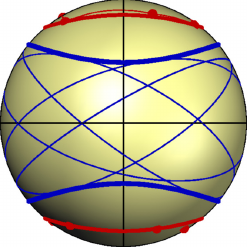}
  \end{minipage}
  \caption{Minimal enclosing conics of a point set and a line set in
    three orthographic projections}
  \label{fig:spherical-hyperbola}
\end{figure}

\section{Conclusion}
\label{sec:conclusion}

We have proved uniqueness of the enclosing conic of minimal area in
the elliptic plane. If the minimizer is sought within a set of conics
with prescribed axes or center, it is unique at any rate
(Theorem~\ref{th:1} and Theorem~\ref{th:2}). In the most general
setting we can show uniqueness only under additional assumptions on
the enclosed set $F$ (Theorems~\ref{th:3}). In particular, $F$ must be
contained in a circle of radius $\Rho$. Given the fact that the
diameter of the elliptic plane is $\pi$, the bounds on the size of $F$
seem acceptable for many applications. The question whether the
conditions on $F$ can be relaxed remains open. Example \ref{ex:1} on
Page \pageref{ex:1} shows that the method of in-between conics is not
capable of proving uniqueness without additional constraints on $F$.

Open questions in the area of extremal quadrics are numerous. Starting
from this article it would be natural to consider uniqueness results
for size functions different from the area and uniqueness of maximal
inscribed conics, generalizations to higher dimensions and other
non-Euclidean geometries, for example the hyperbolic plane.

\appendix

\section{Proofs of auxiliary results}

In this appendix we prove technical results which are needed in the
proofs of our main theorems but are probably of little interest
otherwise.

\begin{lemma}
  \label{lem:3}
  For $C$ as defined in \eqref{eq:40} we have $C < 0$.
\end{lemma}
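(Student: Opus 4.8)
The plan is to reduce the claim $C<0$ to a single, elementary inequality between the complete elliptic integrals. Recall from \eqref{eq:40} that
\[
  C = 2(1+\nu_{0,1})\nu_{0,2}\,\ellK - 2\nu_{0,1}(1+\nu_{0,2})\,\ellE,
\]
with $\ellE = E(f)$, $\ellK = K(f)$ and $f$ as in \eqref{eq:37}. By \eqref{eq:26} we have $\nu_{0,1} > \nu_{0,2} > 0$, so $\nu_{0,1}(1+\nu_{0,2}) > 0$ and $C < 0$ is equivalent to
\[
  \frac{(1+\nu_{0,1})\nu_{0,2}}{\nu_{0,1}(1+\nu_{0,2})}\,\ellK < \ellE .
\]

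The one genuine idea is a short computation identifying the rational coefficient on the left. From \eqref{eq:37}, $f^2 = (\nu_{0,1}-\nu_{0,2})/(\nu_{0,1}(1+\nu_{0,2}))$, whence
\[
  1 - f^2 = \frac{\nu_{0,1}(1+\nu_{0,2}) - (\nu_{0,1}-\nu_{0,2})}{\nu_{0,1}(1+\nu_{0,2})}
          = \frac{\nu_{0,2}(1+\nu_{0,1})}{\nu_{0,1}(1+\nu_{0,2})} .
\]
Thus the coefficient is exactly $1-f^2$, and the whole claim collapses to proving $(1-f^2)K(f) < E(f)$ for the relevant $f$. Note that $0 < f < 1$ here: $f > 0$ because $\nu_{0,1} > \nu_{0,2}$, and $f < 1$ because the quantity $1 - f^2 = \nu_{0,2}(1+\nu_{0,1})/(\nu_{0,1}(1+\nu_{0,2}))$ is strictly positive.

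Finally I would establish $(1-f^2)K(f) < E(f)$ directly from the integral representations \eqref{eq:36}. Combining the two integrands over a common denominator and using $(1-f^2t^2)-(1-f^2) = f^2(1-t^2)$ in the numerator gives
\[
  E(f) - (1-f^2)K(f) = \int_0^1 \frac{f^2(1-t^2)}{\sqrt{1-t^2}\,\sqrt{1-f^2t^2}}\,\dif t
                     = \int_0^1 \frac{f^2\sqrt{1-t^2}}{\sqrt{1-f^2t^2}}\,\dif t .
\]
Since $f>0$, the integrand is strictly positive on $(0,1)$, so the integral is positive and $C<0$ follows. I expect there to be no real obstacle beyond spotting the identity $1-f^2 = \nu_{0,2}(1+\nu_{0,1})/(\nu_{0,1}(1+\nu_{0,2}))$; once the coefficient is recognized, the remaining elliptic-integral inequality is a one-line sign computation on the integrand and needs no estimation or special-function machinery.
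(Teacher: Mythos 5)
Your proof is correct and is essentially the paper's own argument in lightly different clothing: the paper combines $C$ directly into the single integral \eqref{eq:53} with a manifestly negative integrand, which is exactly your identity $C = -2\nu_{0,1}(1+\nu_{0,2})\bigl(\ellE-(1-f^2)\ellK\bigr)$ with $\ellE-(1-f^2)\ellK$ written out as $\int_0^1 f^2\sqrt{1-t^2}/\sqrt{1-f^2t^2}\,\dif t$. No substantive difference.
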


\begin{proof}
  Inserting \eqref{eq:36} and \eqref{eq:37} into the defining
  Equation~\eqref{eq:40} of $C$ we obtain the integral representation
  \begin{equation}
    \label{eq:53}
    C = -2 \int_0^1
    \frac{(\nu_{0,1}-\nu_{0,2})\sqrt{1-t^2}\sqrt{\nu_{0,1}(1+\nu_{0,2})}}
         {\sqrt{t^2(\nu_{0,2}-\nu_{0,1})+\nu_{0,1}(1+\nu_{0,2})}}
    \dif t
  \end{equation}
  Obviously, the integral is positive so that $C$ is negative.
\end{proof}

\begin{lemma}
  \label{lem:4}
  For $A$ as defined in~\eqref{eq:40} we have $A < C$.
\end{lemma}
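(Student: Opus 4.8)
The plan is to reduce the inequality $A<C$ to the sign of a single elliptic integral and then to control that integral by an integration-by-parts identity. First I would collect the $\ellE$- and $\ellK$-terms in the definitions \eqref{eq:40} of $A$ and $C$, which gives
\[
  A - C = 2\nu_{0,1}(2+\nu_{0,1}+\nu_{0,2})\,\ellE
        - 2(1+\nu_{0,1})(\nu_{0,1}+\nu_{0,2})\,\ellK,
\]
so that $A<C$ is equivalent to $A-C<0$. The crude bound $\ellE<\ellK$ is \emph{not} enough here: the coefficient of $\ellE$ exceeds that of $\ellK$, their difference being $2(\nu_{0,1}-\nu_{0,2})>0$, so a sharper estimate is unavoidable.

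Next, exactly as in the proof of Lemma~\ref{lem:3}, I would insert the integral representations \eqref{eq:36} evaluated at the modulus $f$ of \eqref{eq:37} and substitute $t=\sin\theta$. The two square roots combine and, after simplification, the radicand becomes $g(\theta):=\nu_{0,1}(1+\nu_{0,2})\cos^2\theta+\nu_{0,2}(1+\nu_{0,1})\sin^2\theta$ while the numerator collapses to $(1+\nu_{0,2})\cos^2\theta-(1+\nu_{0,1})\sin^2\theta$. I expect to arrive at
\[
  A-C = \frac{2\nu_{0,1}(\nu_{0,1}-\nu_{0,2})}{\sqrt{\nu_{0,1}(1+\nu_{0,2})}}\;I,
  \qquad
  I=\int_0^{\pi/2}\frac{(1+\nu_{0,2})\cos^2\theta-(1+\nu_{0,1})\sin^2\theta}{\sqrt{g(\theta)}}\dif\theta,
\]
with a strictly positive prefactor, so that it remains to prove $I<0$.

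The main obstacle is that, unlike in Lemma~\ref{lem:3}, the integrand of $I$ changes sign on $(0,\pi/2)$, so no pointwise argument succeeds. I would resolve this by writing $\cos^2\theta=\tfrac12(1+\cos2\theta)$ and $\sin^2\theta=\tfrac12(1-\cos2\theta)$, which splits $I$ into a ``radial'' and an ``angular'' part,
\[
  I=\tfrac12(\nu_{0,2}-\nu_{0,1})\int_0^{\pi/2}\frac{\dif\theta}{\sqrt{g(\theta)}}
   +\tfrac12(2+\nu_{0,1}+\nu_{0,2})\int_0^{\pi/2}\frac{\cos2\theta}{\sqrt{g(\theta)}}\dif\theta .
\]
The first summand is negative since $\nu_{0,2}<\nu_{0,1}$ by \eqref{eq:26}. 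For the second I would integrate the auxiliary function $F(\theta)=\sin\theta\cos\theta\,/\sqrt{g(\theta)}$, which vanishes at both endpoints; hence $\int_0^{\pi/2}F'(\theta)\dif\theta=0$. Using $g'(\theta)=-2(\nu_{0,1}-\nu_{0,2})\sin\theta\cos\theta$, this identity rearranges to
\[
  \int_0^{\pi/2}\frac{\cos2\theta}{\sqrt{g(\theta)}}\dif\theta
   = -(\nu_{0,1}-\nu_{0,2})\int_0^{\pi/2}\frac{\sin^2\theta\cos^2\theta}{g(\theta)^{3/2}}\dif\theta<0,
\]
again by $\nu_{0,1}>\nu_{0,2}$. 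Both summands of $I$ are therefore strictly negative, whence $I<0$ and $A<C$.

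I expect the only delicate points to be the bookkeeping in the two algebraic reductions (collecting $A-C$, and verifying that the radicand simplifies to $g$ and the numerator to $(1+\nu_{0,2})\cos^2\theta-(1+\nu_{0,1})\sin^2\theta$) and the computation of $F'$. It is worth noting that this argument uses only the nesting \eqref{eq:26} and is independent of the $\Rho$-bound of Theorem~\ref{th:3}, so $A<C$ holds for every admissible pair of conics.
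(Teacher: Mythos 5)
Your proof is correct, and it takes a genuinely different route from the paper's. The paper argues along $\nu_{0,1}$-parameter lines: it sets $D=A-C$, observes that $D$ and $\partial D/\partial\nu_{0,1}$ vanish at $\nu_{0,1}=\nu_{0,2}$ (where $\ellE=\ellK$), and then proves strict concavity of $D$ in $\nu_{0,1}$ for $\nu_{0,1}>\nu_{0,2}$ via a chain of successively coarser estimates on $\partial^2 D/\partial\nu_{0,1}^2$, the last of which is settled by an integral representation of $\nu_{0,2}\ellK-\nu_{0,1}\ellE$. You instead work with $A-C$ directly. I checked your three computational claims and they all hold: collecting terms in \eqref{eq:40} gives $A-C=2\nu_{0,1}(2+\nu_{0,1}+\nu_{0,2})\ellE-2(1+\nu_{0,1})(\nu_{0,1}+\nu_{0,2})\ellK$, and the coefficient gap $2(\nu_{0,1}-\nu_{0,2})>0$ does defeat the crude bound $\ellE<\ellK$; after $t=\sin\theta$ one has $1-f^2\sin^2\theta=g(\theta)/(\nu_{0,1}(1+\nu_{0,2}))$ with your $g$, and the numerator of the combined integrand collapses to $\nu_{0,1}(\nu_{0,1}-\nu_{0,2})\bigl((1+\nu_{0,2})\cos^2\theta-(1+\nu_{0,1})\sin^2\theta\bigr)$, yielding exactly your positive prefactor times $I$; finally $F(\theta)=\sin\theta\cos\theta/\sqrt{g(\theta)}$ is smooth on $[0,\pi/2]$ (since $g>0$ there), vanishes at both endpoints, and with $g'=-2(\nu_{0,1}-\nu_{0,2})\sin\theta\cos\theta$ the identity $\int_0^{\pi/2}F'=0$ gives precisely your evaluation of the $\cos2\theta$ integral as a strictly negative quantity. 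Both summands of $I$ are then strictly negative by the strict nesting \eqref{eq:26}, so $I<0$ and $A<C$. What your route buys is economy and transparency: it avoids differentiating the elliptic integrals with respect to the parameters and the paper's somewhat delicate cascade \eqref{eq:56}--\eqref{eq:60}, and it makes strictness immediate. What the paper's route buys is uniformity of method: its final step \eqref{eq:61} is of the same form as \eqref{eq:64} in the proof of Lemma~\ref{lem:5}, so the two lemmas share their machinery. You are also right that, like the paper's proof, your argument uses only \eqref{eq:26} and is independent of the $\Rho$-bound of Theorem~\ref{th:3}.
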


\begin{proof}
  We view $A$ and $C$ as functions of $\nu_{0,1}$ and $\nu_{0,2}$ and
  show that the inequality $A < C$ holds for every
  $\nu_{0,1}$-parameter line. For $\nu_{0,1} = \nu_{0,2}$ (which we
  generally exclude, see
  Subsection~\ref{sec:assumptions-semi-axis-lengths}) we have $\ellE =
  \ellK$. Therefore, the function $D(\nu_{0,1},\nu_{0,2}) =
  A(\nu_{0,1},\nu_{0,2}) - C(\nu_{0,1},\nu_{0,2})$ vanishes for
  $\nu_{0,1} = \nu_{0,2}$. The same is true for its derivative with
  respect to $\nu_{0,1}$ which can be computed as
  \begin{equation}
    \label{eq:54}
    \dpd{D}{\nu_{0,1}}(\nu_{0,1},\nu_{0,2}) =
    \frac{2\nu_{0,1}(2\nu_{0,1}+\nu_{0,2})(\ellE - \ellK)
         -(2\nu_{0,1}+\nu_{0,2})\ellK + 3\nu_{0,1}\ellE}
         {\nu_{0,1}}.
  \end{equation}
  If we can show that $D$ is strictly concave in $\nu_{0,1}$ on
  $\nu_{0,1} > \nu_{0,2}$ we may also conclude $D < 0$ for $\nu_{0,1}
  > \nu_{0,2}$. The second partial derivative with respect to
  $\nu_{0,1}$ reads
  \begin{multline}
    \label{eq:55}
    \dpd[2]{D}{\nu_{0,1}} = \frac{1}{2 \nu_{0,1}^2 (1+\nu_{0,1})}
    \big( ( 8\nu_{0,1}^3 + 4\nu_{0,1}^2 -2\nu_{0,1} - \nu_{0,1} \nu_{0,2} ) \ellE\\
    - ( 8\nu_{0,1}^3 + 8\nu_{0,1}^2 - \nu_{0,1} \nu_{0,2} - \nu_{0,2} )\ellK \big).
  \end{multline}
  We have to show that it is negative for $\nu_{0,1} > \nu_{0,2}$.
  Because of $\ellE < \ellK$ we have
  \begin{equation}
    \label{eq:56}
    (8\nu_{0,1}^3 + 4\nu_{0,1}^2)(\ellE - \ellK) < 0.
  \end{equation}
  By subtracting \eqref{eq:56} from \eqref{eq:55} we see that
  \begin{equation}
    \label{eq:57}
    (-2\nu_{0,1} - \nu_{0,1} \nu_{0,2}) \ellE -
    (4\nu_{0,1}^2 - \nu_{0,1} \nu_{0,2} - \nu_{0,2}) \ellK < 0
  \end{equation}
  is sufficient for the negativity of \eqref{eq:55}. Moreover, we have
  $\nu_{0,1}\nu_{0,2} < \nu_{0,1}^2$ so that it enough to show
  \begin{equation}
    \label{eq:58}
    (-2\nu_{0,1} - \nu_{0,1} \nu_{0,2}) \ellE -
    (3\nu_{0,1}^2 - \nu_{0,2}) \ellK < 0.
  \end{equation}
  Clearly the inequality
  \begin{equation}
    \label{eq:59}
    (-\nu_{0,1}-\nu_{0,1}\nu_{0,2})\ellE - 3\nu_{0,1}^2\ellK < 0
  \end{equation}
  holds true. Subtracting \eqref{eq:59} from \eqref{eq:58}, we arrive
  at
  \begin{equation}
    \label{eq:60}
    \nu_{0,2}\ellK -\nu_{0,1}\ellE < 0.
  \end{equation}
  We write $\nu_{0,2}\ellK - \nu_{0,1} \ellE$ in its integral form
  \begin{equation}
    \label{eq:61}
    \nu_{0,2}\ellK - \nu_{0,1} \ellE = -\frac{\nu_{0,1} - \nu_{0,2}}{1 +
      \nu_{0,2}} \int_0^1 \frac{1 + \nu_{0,2} - t^2}{\sqrt{1-t^2}
      \sqrt{1-f^2 t^2}} \dif t
  \end{equation}
  and observe that the factor before the integral is negative while
  the denominator of the integrand is positive. Moreover, the
  numerator of the integrand is linear in $t^2$. For $t = 0$ it equals
  $1+\nu_{0,2} > 0$ and for $t = 1$ it equals $\nu_{0,2} > 0$. Thus,
  it is positive for $t \in [0,1]$. We conclude that \eqref{eq:57}
  holds true. Hence, the $\nu_{0,1}$-parameter lines of $D$ are
  strictly concave so that indeed $A < C$.
\end{proof}

\begin{lemma}
  \label{lem:5}
  If $\nu_{0,2} > v_0$ (this is implied by the assumptions of
  Theorem~\ref{th:3}) we have $B < A$.
\end{lemma}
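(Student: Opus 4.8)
The plan is to reduce the claimed inequality to a statement about the elliptic integrals $\ellE$, $\ellK$ and then to recognise the auxiliary function $J$ of Lemma~\ref{lem:unique-zero} inside a suitable integral representation. Writing $u=\nu_{0,1}$ and $v=\nu_{0,2}$ for brevity, a direct computation from the definitions \eqref{eq:40} gives $A-B=2u\bigl((1+2u-v)\ellE-(1+u)\ellK\bigr)$. Since $u>0$, the inequality $B<A$ is equivalent to
\[
  g:=(1+2u-v)\ellE-(1+u)\ellK>0 .
\]
At $u=v$ we have $f=0$ and $\ellE=\ellK=\pi/2$, so $g$ vanishes there; by \eqref{eq:26} it remains to show $g>0$ for $u>v$.

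The key device is the auxiliary combination $H:=3u\,\ellE-(2u+v)\,\ellK$. Clearing $\ellE$ and $\ellK$ over the common factor $\sqrt{1-f^2t^2}$ and inserting \eqref{eq:37}, the numerator collapses to $\tfrac{u-v}{1+v}(1+v-3t^2)$, so that
\[
  H=\frac{u-v}{1+v}\int_0^1\frac{1+v-3t^2}{\sqrt{1-t^2}\sqrt{1-f^2t^2}}\dif t .
\]
This is exactly where the integrand $1+v-3t^2$ of Lemma~\ref{lem:unique-zero} appears, with its sign change at $t=w=\sqrt{(1+v)/3}$. Moreover $f^2=(u-v)/(u(1+v))<1/(1+v)$. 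On $[0,w]$, where $1+v-3t^2\ge0$, dropping the factor $1/\sqrt{1-f^2t^2}\ge1$ decreases the integrand and reproduces the first integrand of \eqref{eq:24}; on $[w,1]$, where $1+v-3t^2\le0$, replacing $f$ by the larger value $1/\sqrt{1+v}$ likewise decreases it and reproduces the second integrand of \eqref{eq:24}. Hence the integral above is bounded below by $J(v)$, so $H\ge\frac{u-v}{1+v}J(v)$.

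For $v_0<v\le2$, Lemma~\ref{lem:unique-zero} yields $J(v)>0$; for $v>2$ one has $1+v-3t^2\ge v-2>0$ on all of $[0,1]$, so the integral is positive outright. In either case $H>0$ as soon as $v>v_0$ and $u>v$. It remains to pass from $H>0$ to $g>0$, and this is achieved by the purely algebraic identity
\[
  (2u+v)\,g=(1+u)\,H+(u-v)(u+v-1)\,\ellE ,
\]
which is verified by matching the $\ellE$- and $\ellK$-coefficients. The hypothesis $v>v_0>\tfrac12$ together with $u>v$ forces $u+v>1$, so the last term is positive; since $H>0$ and $2u+v>0$, we conclude $g>0$, that is $B<A$.

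I expect the middle step — the lower bound $\int_0^1(\dots)\dif t\ge J(v)$ — to be the main obstacle. The reduction to $g$, the integral representation of $H$, and the final identity are all routine algebra once the right auxiliary combination $H$ is isolated. The comparison with $J$, by contrast, must exploit both the exact location $w=\sqrt{(1+v)/3}$ of the sign change of $1+v-3t^2$ and the strict inequality $f<1/\sqrt{1+v}$; it is precisely this two-piece estimate that motivates the otherwise unusual split in the definition \eqref{eq:24} of $J$.
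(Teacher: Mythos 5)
Your argument is correct, and its analytic core is identical to the paper's: both proofs reduce everything to the sign of $H=3\nu_{0,1}\ellE-(2\nu_{0,1}+\nu_{0,2})\ellK$, pass to its integral representation with numerator $1+\nu_{0,2}-3t^2$, and prove $H>0$ by the two-piece comparison with $J(\nu_{0,2})$ using $1\le(1-f^2t^2)^{-1/2}\le\sqrt{1+\nu_{0,2}}/\sqrt{1+\nu_{0,2}-t^2}$ split at $t=w$ --- this is exactly the chain \eqref{eq:64}--\eqref{eq:70}. The difference lies in the algebraic wrapper around $H$. The paper observes that $2A-B-C=2(1+\nu_{0,1})H\ge0$ and combines this with $A<C$ from Lemma~\ref{lem:4} to get $A-B\ge C-A>0$; your route instead expresses $A-B=2\nu_{0,1}\,g$ with $g=(1+2\nu_{0,1}-\nu_{0,2})\ellE-(1+\nu_{0,1})\ellK$ and uses the identity $(2\nu_{0,1}+\nu_{0,2})\,g=(1+\nu_{0,1})H+(\nu_{0,1}-\nu_{0,2})(\nu_{0,1}+\nu_{0,2}-1)\ellE$ (which checks out: the $\ellK$-coefficients agree trivially and both $\ellE$-coefficients equal $4\nu_{0,1}^2+2\nu_{0,1}+\nu_{0,2}-\nu_{0,2}^2$), closing with the numerical observation $v_0>\tfrac12$, so that $\nu_{0,1}>\nu_{0,2}>v_0$ forces $\nu_{0,1}+\nu_{0,2}>1$. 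What your version buys is independence from Lemma~\ref{lem:4} (no concavity argument for $A-C$ is needed for this particular lemma), at the price of an extra identity and the reliance on the specific value of $v_0$; the paper's version is shorter given that $A<C$ is needed elsewhere anyway. Both are valid, and your handling of the case $\nu_{0,2}>2$ (where the integrand is positive outright, so Lemma~\ref{lem:unique-zero} is not needed) matches the paper's remark as well.
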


\begin{proof}
  We show that $2A - B - C \ge 0$. Because of $A - C < 0$ this implies
  $B < A$. We have
  \begin{equation}
    \label{eq:62}
    2A - B - C = 6 \nu_{0,1} (1 + \nu_{0,1}) \ellE - 2 (2 \nu_{0,1} +
    \nu_{0,2}) (1 + \nu_{0,1}) \ellK.
  \end{equation}
  This expression is not negative if and only if
  \begin{equation}
    \label{eq:63}
    3 \nu_{0,1} \ellE - (2 \nu_{0,1} + \nu_{0,2}) \ellK
    \ge 0.
  \end{equation}
  Writing $\ellE$ and $\ellK$ in their integral forms we
  get
  \begin{equation}
    \label{eq:64}
    I := 3 \nu_{0,1} \ellE - (2 \nu_{0,1} + \nu_{0,2}) \ellK =
    \frac{\nu_{0,1} - \nu_{0,2}}{1 + \nu_{0,2}} \int_0^1
    \frac{1 + \nu_{0,2} - 3 t^2}{\sqrt{1 - t^2} \sqrt{1 - f^2 t^2}}
    \dif t
  \end{equation}
  where $f$ is given by \eqref{eq:37}. We have to show that $I$ is not
  negative. This is obviously true for $\nu_{0,2} \ge 2$ because then
  the integrand is positive for $t \in (0,1)$. But we can do even
  better. Assume $\nu_{0,2} < 2$ and denote by $w =
  \sqrt{(1+\nu_{0,2})/3}$ the unique root of the integrand. We split
  the integral \eqref{eq:64} into a positive and a negative part $I =
  I_1 + I_2$ where
  \begin{equation}
    \label{eq:65}
    I_1 = \frac{\nu_{0,1} - \nu_{0,2}}{1 + \nu_{0,2}}
    \int_0^w\frac{1 + \nu_{0,2} - 3 t^2}{\sqrt{1 - t^2} \sqrt{1 - f^2 t^2}} \dif t
  \end{equation}
  and
  \begin{equation}
    \label{eq:66}
    I_2 = \frac{\nu_{0,1} - \nu_{0,2}}{1 + \nu_{0,2}}
    \int_w^1\frac{1 + \nu_{0,2} - 3 t^2}{\sqrt{1 - t^2} \sqrt{1 - f^2 t^2}} \dif t.
  \end{equation}
  Since $f^2 = (\nu_{0,1} - \nu_{0,2})/(\nu_{0,1} (1 + \nu_{0,2}))$ is
  monotone increasing in $\nu_{0,1}$ we have
  \begin{equation}
    \label{eq:67}
    0 \le f^2 \le \lim_{\nu_{0,1} \to \infty} f^2 = \frac{1}{1 + \nu_{0,2}}.
  \end{equation}
  Using this we find
  \begin{equation}
    \label{eq:68}
    1 \le \frac{1}{\sqrt{1 - f^2 t^2}} \le
    \frac{\sqrt{1+\nu_{0,2}}}{\sqrt{1+\nu_{0,2}-t^2}}
    \qquad \text{for}~ t \in [0,1].
  \end{equation}
  Thus, we can estimate
  \begin{equation}
    \label{eq:69}
    \begin{aligned}
      &I_1 \ge \frac{\nu_{0,1}-\nu_{0,2}}{1+\nu_{0,2}}
      \int_0^w\frac{1+\nu_{0,2}-3t^2}{\sqrt{1-t^2}} \dif t
      \quad\text{and}\\
      &I_2 \ge \frac{\nu_{0,1}-\nu_{0,2}}{1+\nu_{0,2}}
      \int_w^1\frac{(1+\nu_{0,2}-3t^2)\sqrt{1+\nu_{0,2}}}{\sqrt{1-t^2}\sqrt{1+\nu_{0,2}-t^2}} \dif t.
    \end{aligned}
  \end{equation}
  This allows to discuss the non-negativity of
  \begin{equation}
    \label{eq:70}
    \int_0^w\frac{1+\nu_{0,2}-3t^2}{\sqrt{1-t^2}} \dif t
    +
    \int_w^1\frac{(1+\nu_{0,2}-3t^2)\sqrt{1+\nu_{0,2}}}{\sqrt{1-t^2}\sqrt{1+\nu_{0,2}-t^2}} \dif t
  \end{equation}
  instead of $I_1 + I_2$. But this is guaranteed by the Theorem's
  assumptions (compare with Equation~\eqref{eq:24}).
\end{proof}

\begin{lemma}
  \label{lem:6}
  Under the assumptions $\nu_{0,2} > v_0$ and \eqref{eq:41}
  ($r_3^2-r_1^2 \ge 0$) we have
  \begin{equation}
    \label{eq:71}
    (2B - A - C) r_3^2 + (A - C) (r_2^2 - r_1^2) \le 0.
  \end{equation}
\end{lemma}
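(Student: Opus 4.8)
The plan is to bound the left-hand side of \eqref{eq:71} from above by a manifestly non-positive quantity, using the full ordering $B < A < C < 0$ from \eqref{eq:44} together with the two hypotheses of the lemma. First I would record the signs of the two coefficients appearing in \eqref{eq:71}: since $B < A$ and $B < C$ we have $2B - A - C < 0$, so the coefficient of $r_3^2$ is negative, while $A - C < 0$ by Lemma~\ref{lem:4}, so the coefficient of $r_2^2 - r_1^2$ is negative as well. The difficulty is that $r_2^2 - r_1^2$ may take either sign, so the second summand is not individually controlled and the two terms must be played off against each other.

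The key step will be to establish the lower bound $r_2^2 - r_1^2 \ge -r_3^2$. I would obtain it from the chain $r_2^2 - r_1^2 \ge -r_1^2 \ge -r_3^2$, in which the first inequality is just $r_2^2 \ge 0$ and the second is precisely hypothesis \eqref{eq:41}, namely $r_3^2 - r_1^2 \ge 0$. Since $A - C < 0$, multiplying this bound through by $A - C$ reverses the inequality and gives $(A - C)(r_2^2 - r_1^2) \le (C - A) r_3^2$.

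Substituting this estimate into \eqref{eq:71} then collapses the whole expression:
\[
  (2B - A - C) r_3^2 + (A - C)(r_2^2 - r_1^2)
  \le (2B - A - C + C - A) r_3^2 = 2(B - A) r_3^2 .
\]
To finish I would invoke Lemma~\ref{lem:5}, which applies because of the hypothesis $\nu_{0,2} > v_0$ and supplies $B < A$; hence $2(B - A) \le 0$, and together with $r_3^2 \ge 0$ the right-hand side is non-positive, proving \eqref{eq:71}. The main (indeed essentially the only) obstacle is spotting the lower bound $r_2^2 - r_1^2 \ge -r_3^2$ and recognizing that assumption \eqref{eq:41} is exactly what makes it hold; everything else is immediate from the sign information already assembled in \eqref{eq:44} and Lemmas~\ref{lem:4} and~\ref{lem:5}.
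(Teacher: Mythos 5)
Your proof is correct, and it takes a slightly different route from the paper's. The paper rewrites the left-hand side of \eqref{eq:71} as
\begin{equation*}
  \bigl((B - A) r_3^2 - (A - C) r_1^2\bigr) + \bigl((B - C) r_3^2 + (A - C) r_2^2\bigr),
\end{equation*}
disposes of the second bracket term by term (both summands are non-positive since $B<C$ and $A<C$), and controls the first bracket by the sharper inequality $B - A \le A - C$ (equivalently $2A - B - C \ge 0$, established inside the proof of Lemma~\ref{lem:5}), which yields the chain \eqref{eq:75} that it borrows from the proof of Lemma~\ref{lem:7}. You instead bound $r_2^2 - r_1^2 \ge -r_1^2 \ge -r_3^2$ using \eqref{eq:41}, multiply by the negative quantity $A-C$, and collapse the whole expression to $2(B-A)r_3^2 \le 0$. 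Both arguments exploit the hypothesis $r_3^2 \ge r_1^2$ in the same essential way, namely to trade $r_1^2$ for $r_3^2$ against the positive coefficient $C-A$; the difference is that you need only the conclusion $B<A$ of Lemma~\ref{lem:5}, not the quantitative intermediate fact $2A-B-C\ge 0$, and your argument is self-contained rather than cross-referencing the proof of Lemma~\ref{lem:7}. All the sign facts you invoke ($A<C$ from Lemma~\ref{lem:4}, $B<A$ from Lemma~\ref{lem:5} under $\nu_{0,2}>v_0$) are available at this point, so the proof stands.
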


\begin{proof}
  We write~\eqref{eq:71} as
  \begin{multline}
    \label{eq:72}
    (2B - A - C) r_3^2 + (A - C) (r_2^2 - r_1^2)\\
    = (B - A) r_3^2 - (A - C) r_1^2 + (B - C) r_3^2 + (A - C) r_2^2.
  \end{multline}
  The non-positivity of $(B - A) r_3^2 - (A - C) r_1^2$ is shown in
  the proof of Lemma~\ref{lem:7} in~\eqref{eq:75}.  The Lemma's claim
  follows from the fact that the remaining terms are not positive.
\end{proof}

\begin{lemma}
  \label{lem:7}
  Under the assumptions $\nu_{0,2} > v_0$ and \eqref{eq:41}
  ($r_3^2-r_1^2 \ge 0$) we have
  \begin{equation}
    \label{eq:73}
      r_1^2 r_2^2 (C - A) (\nu_{1,1} - \nu_{1,2})
      + r_1^2 r_3^2 (B - C) (1 + \nu_{1,2})
      + r_2^2 r_3^2 (B - A) (1 + \nu_{1,1}) \le 0.
  \end{equation}
\end{lemma}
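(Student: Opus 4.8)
The plan is to reduce \eqref{eq:73} to pure sign bookkeeping, the only genuine input being the inequalities $B < A < C < 0$ of \eqref{eq:44} together with $\nu_{1,1} \ge \nu_{1,2} > 0$. These give $C - A > 0$, $A - B > 0$, $B - C < 0$, $B - A < 0$, and $1 + \nu_{1,1} \ge 1 + \nu_{1,2} > 0$. Of the three summands in \eqref{eq:73} the second and third are already manifestly non-positive; the first, carrying the factor $(C-A)(\nu_{1,1}-\nu_{1,2}) \ge 0$, is the obstruction, and the whole point is to show the two negative terms absorb it. This is exactly where the hypotheses $\nu_{0,2} > v_0$ and \eqref{eq:41} must enter.

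First I would isolate the auxiliary estimate that also feeds Lemma~\ref{lem:6}, namely
\begin{equation}
  \label{eq:75}
  (B - A) r_3^2 - (A - C) r_1^2 \le 0.
\end{equation}
The proof of Lemma~\ref{lem:5} does not merely yield $B < A$ but the sharper $2A - B - C \ge 0$, which rewrites as $A - B \ge C - A \ge 0$. Since \eqref{eq:41} provides $r_3^2 \ge r_1^2 \ge 0$ and $C - A \ge 0$, I chain $(C-A) r_1^2 \le (C-A) r_3^2 \le (A-B) r_3^2$; as $-(A-C) = C-A$, this is precisely \eqref{eq:75}. This step is where $\nu_{0,2} > v_0$ is consumed, through its use in Lemma~\ref{lem:5}.

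Next I would split the offending term via $\nu_{1,1} - \nu_{1,2} = (1 + \nu_{1,1}) - (1 + \nu_{1,2})$ and regroup the left-hand side of \eqref{eq:73} by the factors $1 + \nu_{1,1}$ and $1 + \nu_{1,2}$:
\begin{multline}
  r_1^2 r_2^2 (C - A) (\nu_{1,1} - \nu_{1,2})
  + r_1^2 r_3^2 (B - C) (1 + \nu_{1,2})
  + r_2^2 r_3^2 (B - A) (1 + \nu_{1,1}) = \\
  r_2^2 (1 + \nu_{1,1}) \bigl( (C - A) r_1^2 + (B - A) r_3^2 \bigr)
  + r_1^2 (1 + \nu_{1,2}) \bigl( (B - C) r_3^2 - (C - A) r_2^2 \bigr).
\end{multline}
The bracket in the first summand is exactly the left-hand side of \eqref{eq:75}, hence non-positive, and it is multiplied by the non-negative factor $r_2^2 (1 + \nu_{1,1})$. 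The bracket in the second summand is non-positive term by term, since $B - C < 0$ and $C - A > 0$, and it is multiplied by $r_1^2 (1 + \nu_{1,2}) \ge 0$. A sum of two non-positive quantities is non-positive, which is the claim.

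The main obstacle is not analytic but structural: discovering the regrouping that makes the combination $(C-A) r_1^2 + (B-A) r_3^2$ emerge, so that the single positive contribution is neutralized by \eqref{eq:75}. Once that decomposition is in hand, everything reduces to the sign facts above and to Lemma~\ref{lem:5}, in which the real work\,---\,controlling $\ellE$ against $\ellK$ via the function $J$ and the threshold $v_0$\,---\,already resides.
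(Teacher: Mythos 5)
Your proof is correct and follows essentially the same route as the paper: both arguments hinge on the estimate $(B-A)r_3^2-(A-C)r_1^2\le 0$ of \eqref{eq:75}, obtained from $2A-B-C\ge 0$ (proof of Lemma~\ref{lem:5}) together with $r_3^2\ge r_1^2$, with everything else being sign bookkeeping from $B<A<C<0$ and $\nu_{1,1}\ge\nu_{1,2}$. The only difference is cosmetic: you regroup by the factors $1+\nu_{1,1}$ and $1+\nu_{1,2}$, whereas the paper collects the constant, $\nu_{1,2}$-, and $\nu_{1,1}$-coefficients separately in \eqref{eq:74}; both regroupings isolate the same critical bracket.
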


\begin{proof}
  We write~\eqref{eq:73} as
  \begin{equation}
    \label{eq:74}
    \begin{gathered}
      r_3^2 (r_1^2 (B - C) + r_2^2 (B - A))
      + r_1^2 (r_2^2 (A - C) + r_3^2 (B - C)) \nu_{1,2} \\
      + r_2^2 (-r_1^2 (A - C) + r_3^2 (B - A)) \nu_{1,1}.
    \end{gathered}
  \end{equation}
  The first row is not positive; the second row needs closer
  investigation. We know that $B - A \le A - C$ because of $2A - B - C
  \ge 0$ (see the proof of Lemma~\ref{lem:5}). Therefore we can bound
  the relevant factor of the second row of~\eqref{eq:74} according to
  \begin{equation}
    \label{eq:75}
    -r_1^2 (A - C) + r_3^2 (B - A) \le (A - C) (r_3^2 - r_1^2) \le 0.
  \end{equation}
  This implies that \eqref{eq:74} and therefore \eqref{eq:73} is
  not-positive.
\end{proof}

\section*{Acknowledgments}

The authors gratefully acknowledge support of this research by the
Austrian Science Foundation FWF under grant P21032 (Uniqueness Results
for Extremal Quadrics).

\bibliographystyle{plainnat}
\bibliography{mrabbrev,exquadb}

\begin{thebibliography}{16}
\providecommand{\natexlab}[1]{#1}
\providecommand{\url}[1]{\texttt{#1}}
\expandafter\ifx\csname urlstyle\endcsname\relax
  \providecommand{\doi}[1]{doi: #1}\else
  \providecommand{\doi}{doi: \begingroup \urlstyle{rm}\Url}\fi

\bibitem[Barequet and Elber(2005)]{barequet05:_optimal_bounding_cones}
G.~Barequet and G.~Elber.
\newblock Optimal bounding cones of vectors in three dimensions.
\newblock \emph{Inform. Process. Lett.}, 93\penalty0 (2):\penalty0 83--89,
  2005.

\bibitem[Behrend(1937)]{behrend37:_affininv_konvex}
F.~Behrend.
\newblock {Über einige Affininvarianten konvexer Bereiche}.
\newblock \emph{Math. Ann.}, 113:\penalty0 712--747, 1937.

\bibitem[Behrend(1938)]{behrend38:_kleinste_ellipse}
F.~Behrend.
\newblock {Über die kleinste umbeschriebene und die größte einbeschriebene
  Ellipse eines konvexen Bereiches}.
\newblock \emph{Math. Ann.}, 115:\penalty0 397--411, 1938.

\bibitem[Danzer et~al.(1957)Danzer, Laugwitz, and
  Lenz]{danzer57:_loewner_ellipsoid}
L.~Danzer, D.~Laugwitz, and H.~Lenz.
\newblock {Über das Löwnersche Ellipsoid und sein Analogon unter den einem
  Eikörper einbeschriebenen Ellipsoiden}.
\newblock \emph{Arch. Math.}, 8\penalty0 (3):\penalty0 214--219, 1957.

\bibitem[Davis(1957)]{davis57:_convex_functions}
Ch. Davis.
\newblock All convex invariant functions of {Hermitian} matrices.
\newblock \emph{Arch. Math.}, 8\penalty0 (4):\penalty0 276--278, 1957.

\bibitem[Firey(1964)]{firey64:_means_of_convex_bodies}
W.~J. Firey.
\newblock Some applications of means of convex bodies.
\newblock \emph{Pacific J. Math.}, 14\penalty0 (1):\penalty0 53--60, 1964.

\bibitem[Gruber(2008)]{gruber08:_john_type}
P.~M. Gruber.
\newblock Application of an idea of {Voronoi} to {John} type problems.
\newblock \emph{Adv. in Math.}, 218\penalty0 (2):\penalty0 309--351, 2008.

\bibitem[John(1948)]{john48:_studies_and_essays}
F.~John.
\newblock \emph{Studies and essays. {Courant} anniversary volume}, chapter
  Extremum problems with inequalities as subsidary conditions, pages 187--204.
\newblock Interscience Publ. Inc., New York, 1948.

\bibitem[Juhnke(1990)]{juhnke90:_min_ellipsoid}
F.~Juhnke.
\newblock {Volumenminimale Ellipsoidüberdeckungen}.
\newblock \emph{Beitr. Algebra Geom.}, 30:\penalty0 143--153, 1990.

\bibitem[Juhnke(1994)]{juhnke94:_semi_infinite}
F.~Juhnke.
\newblock Embedded maximal ellipsoids and semi-infinite optimization.
\newblock \emph{Beitr. Algebra Geom.}, 35\penalty0 (2):\penalty0 163--171,
  1994.

\bibitem[Klartag(2004)]{klartag04:_john_type_ellipsoids}
B.~Klartag.
\newblock On {John}-type ellipsoids.
\newblock In \emph{Geometric Aspects of Functional Analysis}, Lecture Notes in
  Mathematics, pages 149--158. Springer, Berlin, Heidelberg, 2004.

\bibitem[Lawson(1965)]{lawson65:_smallest_covering_cone}
C.~Lawson.
\newblock The smallest covering cone or sphere.
\newblock \emph{SIAM Rev.}, 7\penalty0 (3):\penalty0 415--417, 1965.

\bibitem[Lewis(1996)]{lewis96:_convex_analysis}
A.~S. Lewis.
\newblock Convex analysis on the {Hermitian} matrices.
\newblock \emph{SIAM J. Optim.}, 6\penalty0 (1):\penalty0 164--177, 1996.

\bibitem[Schröcker(2007)]{schroecker07:_minim_hyper}
H.-P. Schröcker.
\newblock Minimal enclosing hyperbolas of line sets.
\newblock \emph{Beitr. Algebra Geom.}, 48\penalty0 (2):\penalty0 367--381,
  2007.

\bibitem[Schröcker(2008)]{schroecker08:_uniqueness_results_ellipsoids}
H.-P. Schröcker.
\newblock Uniqueness results for minimal enclosing ellipsoids.
\newblock \emph{Comput. Aided Geom. Design}, 25\penalty0 (9):\penalty0
  756--762, 2008.

\bibitem[Weber and Schröcker(2010)]{weber10:_davis_convexity_theorem}
M.~J. Weber and H.-P. Schröcker.
\newblock Davis' convexity theorem and extremal ellipsoids.
\newblock \emph{Beitr. Algebra Geom.}, 51\penalty0 (1):\penalty0 263--274,
  2010.

\end{thebibliography}

\end{document}